\documentclass{article}
\usepackage{amssymb}
\usepackage{verbatim}
\usepackage{amsmath}
\usepackage{amsfonts}
\usepackage{amsthm}
\usepackage{xcolor}
\usepackage{array}
\usepackage{enumerate}
\usepackage{graphicx}
\usepackage[version=4]{mhchem}
\usepackage{url} 
\usepackage{tikz-cd}

\newtheorem{remark}{Remark}
\newtheorem{definition}{Definition}
\newtheorem{example}{Example}
\newtheorem{corollary}{Corollary}
\newtheorem{lemma}{Lemma}
\newtheorem{proposition}{Proposition}

\begin{document}


\author{Justin Eilertsen\\
        Mathematical Reviews\\ American Mathematical Society\\
        416 4th Street\\Ann Arbor, MI, 48103\\
        e-mail: {\tt jse@ams.org}\\\\
        Valery G.\ Romanovski\\
        Faculty of Electrical Engineering and Computer Science \\ University of Maribor, 
        Koro\v ska cesta 46, SI-2000 Maribor, Slovenia\\
        Center for Applied Mathematics and Theoretical Physics \\
        Mladinska 3,  SI-2000 Maribor, Slovenia\\
        Faculty of Natural Science and Mathematics  \\
        University of Maribor, Koro\v ska cesta 160, SI-2000 Maribor, Slovenia\\
        email:{\tt valerij.romanovskij@um.si}    \\\\
        Santiago Schnell\\
        Department of Mathematics \\ 
        Dartmouth, Hanover, NH 03755\\
        Department of Biochemistry \& Cell Biology, and \\
        Department of Biomedical Data Sciences\\
        Geisel School of Medicine at Dartmouth, Hanover, NH 03755\\
        e-mail: {\tt santiago.schnell@dartmouth.edu}\\\\
        Sebastian Walcher\\
        Fachgruppe Mathematik, RWTH Aachen\\
        D-52056 Aachen, Germany\\
        e-mail: {\tt walcher@mathga.rwth-aachen.de}
        }

\title{Lumping of reaction networks: \\Generic and critical parameters}

\maketitle

\begin{abstract}
We investigate linear lumping for parameter-dependent mass action reaction networks, 
distinguishing between generic and critical parameter regimes. For generic parameters---those 
ranging in some non-empty open subset of parameter space---we prove that exact linear lumping 
yields only ``obvious'' reductions: elimination of non-reactant species or projections along
stoichiometric first integrals. This characterization extends to reaction networks with 
product-form kinetics, including Michaelis--Menten and Hill-type rate laws. For mass action systems we proceed to develop 
an algorithmic approach to identify critical parameter sets---algebraic subvarieties in parameter 
space where non-trivial lumpings become available. This procedure reduces the determination of lumping maps to a system of finitely 
many polynomial equations. It also applies to constrained lumping scenarios (which are frequently motivated 
by chemical considerations). We then review and extend results about proper lumpings. Finally, we discuss lumpings of a self-replicator system, and of a two-pathway enzyme mechanism, to document the viability of our methods in relevant scenarios. Our results clarify the 
relationship between structural (parameter-independent) and fine-tuned (parameter-dependent)
reductions, with implications for approximate lumping when system parameters lie near 
critical values.\\
\noindent\textbf{MSC2020.} 34A34, 34C20, 80A30, 92C45, 92C40, 13P10, 15A04.\\
\textbf{Key words.} linear lumping, reaction networks, mass action kinetics, model reduction, invariant subspaces, critical parameters, algebraic varieties.\\
\end{abstract}

\section{Introduction}

Reaction networks from chemistry and biochemistry give rise to parameter-dependent polynomial 
ordinary differential equations that may involve many variables, ``$x\in \mathbb{R}^n$" (concentrations of chemical 
species) and many parameters, ``$k\in \mathbb{R}^d$" (such as rate constants) and is of the general form
\begin{equation}
\dot{x} = F(x,k),  \quad F: \mathbb{R}^n\times \mathbb{R}^d\to \mathbb{R}^n
\end{equation}
where ``$\dot{\phantom{x}}$" denotes differentiation with respect to time.

From both theoretical and practical perspectives, 
it is desirable to construct related systems of smaller dimension that are more amenable to 
mathematical analysis, numerical simulation, and parameter identification. Dimension reduction 
methods provide a toolbox for such simplifications, with two fundamentally different 
approaches: On the one hand, one aims to identify (locally attracting) distinguished invariant {\it submanifolds} of $\mathbb{R}^n$. In biochemistry, such approximations are frequently referred to as quasi-steady-state approximations (QSSA); see e.g. Segel and Slemrod~\cite{SegelSlemrod1989,Shoffner}. They include time scale separation methods from singular perturbation theory (Fenichel \cite{Fenichel1979}),
computational singular perturbation (CSP); see Lam and Goussis \cite{LamGoussis1994}, and intrinsic low-dimensional 
manifolds (ILDM); see Maas and Pope \cite{MaasPope1992}.
Goeke, Walcher, and Zerz~\cite{GWZ2015} discuss the relation and distinction between QSSA and singular 
perturbation reduction. Time-scale methods produce \emph{approximate} reductions that are valid asymptotically 
as the time-scale ratio becomes extreme~\cite{Fenichel1979, LamGoussis1994, MaasPope1992}.

In a different approach, one uses \emph{lumping methods}, which aggregate variables 
via algebraic conditions that hold globally. The notion of lumping may admit several interpretations depending on authors' backgrounds and 
objectives. In most settings, lumping refers to an aggregation of variables into a smaller 
set of new variables that satisfy a closed system of differential equations. More specifically, one looks for a smooth map, $\Phi$, from $\mathbb{R}^n$ to $\mathbb{R}^m$ with $m < n$ such that the rank of the Jacobian is generically equal to $m$. Here, we obtain a reduced system
\begin{equation}
\dot{y} =  G(y,k).
\end{equation}
If (for instance) the rank of the Jacobian is maximal everywhere, then every level set $\Phi(x)={\rm const.}$ is a submanifold of $\mathbb R^n$. Then to any solution of the reduced system there corresponds a``submanifold moving with time" of the original system, and in particular for every invariant set of the reduced equation, its inverse image is an invariant set of the original equation. \\
In the present paper, we focus on linear lumping maps $T:\, \mathbb R^n\to\mathbb R^m$ of maximal rank; the submanifolds are then affine subspaces. In this setting the existence condition for 
a reduced system reads
$TF(x,k)=G(Tx,k)$. Our main results will be concerned with the existence and construction of linear lumping maps.

 We distinguish 
three main variants:

\medskip
\noindent\textbf{Exact lumping.} 
A lumping is \emph{exact} if every solution of the original system $\dot x=F(x)$
maps to a solution of a certain reduced system, with no approximation error. The classical work of 
Wei and Kuo~\cite{WK1969a} on linear lumping for monomolecular reactions falls in this 
category, establishing the fundamental correspondence between linear lumping maps and 
invariant subspaces of the kinetic matrix. Li and Rabitz~\cite{LR1989} extended these ideas 
to nonlinear systems, showing that exact linear lumping requires the kernel of the lumping 
map to be invariant under all Jacobians $DF(x)$. Li, Rabitz, and T\'oth~\cite{LRT1994} further 
discussed nonlinear lumping transformations.

\medskip
\noindent\textbf{Constrained lumping.} 
In applications, one often prescribes that certain 
observables (linear combinations of concentrations) must appear in the reduced system. 
Constrained lumping seeks the maximal reduction compatible with these constraints. 
Li and Rabitz~\cite{LR1991a,LR1991b} developed systematic approaches, and (for instance) the recent CLUE 
algorithm by Ovchinnikov et al.~\cite{OVPT2021} provides an efficient computational 
implementation.

\medskip
\noindent\textbf{Approximate lumping.} 
When exact lumping conditions fail---the typical situation 
in practice---one may relax the algebraic conditions to obtain approximate reductions. 
Wei and Kuo~\cite{WK1969b} initiated this direction for monomolecular systems, while 
Li and Rabitz~\cite{LR1990} developed the general theory using Luenberger observer methods, 
extended to nonlinear maps in Li et al.\ \cite{LTRT1994}. From dependency theorems for ordinary differential equations (for instance in Walter \cite{Walter1998}; see also 
Leguizamon-Robayo et al.~\cite{LJTTV}) one obtains rigorous error bounds: if 
the lumping condition is relaxed by tolerance $\varepsilon$, the approximation error on any compact time interval is 
$O(\varepsilon)$. 

The algebraic theory of lumping for linear systems is based on Wei and Prater~\cite{WP1962}, who studied 
reversible first-order networks satisfying detailed balance. For such systems, the kinetic 
matrix possesses real, non-positive eigenvalues and symmetry in an appropriate inner 
product---properties enabling diagonalizability and systematic decomposition. Wei and Kuo~\cite{WK1969a} then 
established a definitive characterization for monomolecular systems (not necessarily satisfying detailed balance): a linear map $T$ 
defines an exact lumping for $\dot{x} = Kx$ if and only if the row space of $T$ is 
$K^{\rm tr}$-invariant. When eigenvalues are distinct, any subspace spanned by eigenvector subsets 
is invariant; repeated eigenvalues may require Jordan block analysis. This eigenspace 
characterization connects lumping to spectral theory and Markov chain 
aggregation (see Kemeny and Snell \cite{KeSn1960}).

The extension to nonlinear systems by Li and Rabitz~\cite{LR1989} revealed that lumpability 
depends on \emph{joint} invariant subspace structure: $T$ is a lumping map for 
$\dot{x} = F(x)$ if and only if the row space of $T$ is invariant under $DF(x)^{\rm tr}$ for 
\emph{all} $x$. For polynomial systems, this poses a finite linear algebra problem. 
T\'oth et al.~\cite{TLRT1997} noted some dynamical implications of the fact that lumping preserves 
invariant sets.

Recent computational advances have made linear lumping practically tractable for large systems. 
The CLUE algorithm (Ovchinnikov et al.\ \cite{OVPT2021}) computes maximal exact reductions for polynomial ODEs,
handling systems with thousands of variables. The ERODE framework (Cardelli et al. \cite{CardelliTTV2017}) 
approaches reduction through partition refinement, connecting to bisimulation theory from 
computer science. These tools have enabled systematic assessment of lumpability across 
model databases, revealing that over 64\% of models in the BioModels database\footnote{See {\tt https://www.biomodels.org}.} admit exact lumping 
reductions; see Perez Verone at al.\ \cite{PerezVerona2021}.

Reaction networks are 
inherently parameter-dependent, with rate constants that may be known precisely, estimated 
from data, or treated as free parameters. Prior work has largely focused on two extremes.
On one hand, \emph{parameter-independent lumping} seeks reductions valid for \emph{all} 
parameter values, as implemented in CLUE~\cite{OVPT2021}; such reductions are 
``structural'', determined by network topology alone.  On the other hand, 
\emph{fixed-parameter lumping} finds reductions for specific numerical parameter values, 
as in classical Wei-Kuo theory. Between these extremes lies unexplored territory: How does 
lumpability vary as parameters change? For which parameter values do non-trivial lumpings 
exist? This is the {\it parametric perspective}, and from this viewpoint we pursue three main objectives. First, we 
characterize generic lumping, showing that for generic parameters (those ranging in some non-empty
open subset of parameter space), exact linear lumping yields only ``obvious'' reductions—--elimination of 
non-reactant species or projections along stoichiometric first integrals. This explains 
why structural lumping often fails to find remarkable reductions. Second, we develop an 
algorithmic approach to identify critical parameters where non-trivial lumpings become 
available; these lie on semi-algebraic subvarieties in parameter space, and their determination 
in principle (up to size-related feasibility matters) reduces to solving finitely many polynomial systems. Third, for the important class of 
quadratic systems (including networks with at most bimolecular reactions), we provide a complete characterization of proper lumpings—those where each species contributes to exactly one 
macro-variable.
As will be seen, lumping methods provide a tool for discovering special structure in reaction networks. These may include ``hidden conservation laws'' (i.e., additional first integrals) at critical parameter values, which still hold approximately at nearby parameter values, and generally particular invariant sets.

\subsection{Overview of results}

The main contributions are summarized as follows. \textbf{Section 2} establishes the mathematical framework. We consider parameter-dependent polynomial ODEs $\dot{x} = F(x,k)$ with $x \in \mathbb{R}^n$ and $k \in \mathbb{R}^d$, and recall the Li-Rabitz criterion (Proposition \ref{HWprop}) that $T$ is a linear lumping map if and only if $\ker T$ is invariant under all Jacobians $DF(x,k^*)$. This reduces lumpability to a question about joint invariant subspaces.

\textbf{Section 3} analyzes lumping for generic mass action networks and some generalizations. For a single mass action reaction we show in Proposition \ref{prop:onereac} that invariant subspaces are of two types: Type 1 corresponding to non-reactant species, and Type 2 corresponding to stoichiometric first integrals. For reaction networks with generic parameters (Proposition \ref{prop:all}), a lumping map must be a lumping for each individual reaction. The resulting characterization (Proposition \ref{prop:mixlump}, Corollary \ref{cor:mixlump}) shows that generic lumping yields only reductions by eliminating common non-reactant species or using common stoichiometric first integrals. We provide a construction algorithm and extend these results to product-form kinetics including Michaelis--Menten and Hill-type rate laws. On this basis, one obtains a simplification of the CLUE algorithm.

\textbf{Section 4} develops the theory of critical parameters. Given a candidate lumping map $T$, we determine necessary and sufficient conditions on parameters $k^*$ for $T$ to be solution-preserving to a system of smaller dimension (Lemma~\ref{lalem}). These conditions form a system of polynomial equations in the rate parameters and entries of $T$. Thus finding all critical parameters reduces to solving finitely many such systems, one for each choice of independent columns in the row-echelon form of $T$. The approach extends to constrained lumping (Remark~\ref{remconstr}), where some rows of $T$ are prescribed. As can be expected for systems of polynomial equations, their size may lead to feasibility problems. Three worked examples illustrate the method: a three-species first-order network, the reversible Michaelis--Menten system, and a constrained reduction of Michaelis--Menten preserving stoichiometric first integrals. 

\textbf{Section 5} studies proper lumping and symmetry-based approaches. For \emph{proper lumping}---where species partition into blocks and each macro-variable sums concentrations within a block (see Wei and Kuo \cite{WK1969a}, and also Cardelli et al. \cite{CardelliTTV2017})---we establish a  column-sum criterion (Proposition \ref{colslem}): $k^*$ is critical if and only if all column sums within each Jacobian block are equal. We then investigate lumpings that are motivated by the chemical assumption that certain species behave (dynamically) alike. A mathematical interpretation of this assumption leads to species permutations that respect complexes, and thus to graph automorphisms. Taking a further step, restricting the orbit space reduction to linear invariants will identify critical parameters. 

\textbf{Section 6} applies the theory to a self-replication model from origin-of-life chemistry, and to a two-pathway enzyme system. The purpose is to document and illustrate the applicability of our theoretical framework to relevant systems. We add a few examples to indicate that the reduction reveals mathematically and biologically interesting features. In particular, at some critical values, ``hidden conservation laws'' may emerge, confining the dynamics to a lower-dimensional manifold, and small perturbations of critical parameters may lead to interesting dynamical behavior. We only sketch these applications in the present work; a thorough discussion will be the subject of a future paper. 

\textbf{Section 7} closes the paper with a discussion and a view toward future work.


\section{Setting}

Our focus lies on parameter-dependent ordinary differential equations
\begin{equation}\label{parode}
\dot x=F(x,k), \quad x\in \mathbb R^n,\, k\in \mathbb R_{+}^d,
\end{equation}
where $F:\,\mathbb R^n\times\mathbb R^d\to \mathbb R^n$ is a polynomial map. In some instances, we will also include scenarios with $F$ analytic on an open subset of $\mathbb R^n\times\mathbb R^d$. Moreover, for fixed $k^*\in \mathbb R^d$ we call
\begin{equation}\label{parodes}
\dot x=F(x,k^*), \quad x\in \mathbb R^n,
\end{equation}
the {\em specialization} of \eqref{parode} at the parameter value $k^*$.

We are interested in the existence of linear lumping maps for such parameter-dependent systems.  Thus, consider  a linear map
\begin{equation}\label{Tdef}
\mathbb R^n\to \mathbb R^e, \quad e<n, \quad x\mapsto Tx, \quad{\rm rank}\,T=e.
\end{equation}
Then, by a familiar criterion, $T$ defines a solution preserving map\footnote{Thus, for every solution $z(t)$  of  \eqref{parodes}, $Tz(t)$ is a solution of \eqref{redparodes}.}  from a specialization \eqref{parodes} to a polynomial system
\begin{equation}\label{redparodes}
\dot y = G(y, k^*)
\end{equation}
if and only if the following identity holds:
\begin{equation}\label{linlump}
TF(x,k^*)=G(Tx,k^*).
\end{equation}
If condition \eqref{linlump} is satisfied, then we call $T$ a {\em linear lumping map} for the parameter value $k^*$, and \eqref{redparodes} a {\em reduced system} for \eqref{parodes}.
\begin{remark}\label{nonuniqrem} {\em Non-uniqueness:
To every linear lumping map $T$ one has equivalent linear lumping maps $QT$ for every invertible $Q\in\mathbb R^{e\times e}$, with reduced system $\dot y=QG(Q^{-1}y,k^*)$. This fact reflects the possibility of basis changes in $\mathbb R^e$, or (in other words) the possibility to apply Gauss row operations for simplification}.  
\end{remark}
\begin{remark}\label{restrem}{\em 
Ovchinnikov and co-authors \cite{OVPT2021} consider \eqref{parode} as a differential equation in $\mathbb R^{n+d}$ for variables $(x,\,k)$, augmented by the additional equations $\dot k=0$. Thus, they are interested in simultaneous lumpings that are applicable for all parameters. But in their algorithms they only consider lumping maps that act on $x$ alone.
}
\end{remark}
Linear lumping maps are subject to rather strong restrictions: For given $F$, Li and Rabitz \cite{LR1989} noted that $T$ defines a lumping map if and only if its transpose $T^{\rm tr}$ stabilizes every subspace that is invariant for the transposes of all the Jacobians\footnote{The Jacobian will always be taken with respect to the variable $x$.} $DF(x, k^*)$, $x\in\mathbb R^n$. 
For monomolecular reaction networks, thus $F$ linear, see the earlier seminal work \cite{WK1969a} by Wei and Kuo\footnote{Wei and Kuo discussed further restrictions on lumpings, to ensure that the reduced system again admits an interpretation via a reaction network.}.

A relatively convenient criterion was given in Hadeler and Walcher~\cite{HW2006}. We include a proof here, for the sake of completeness.
\begin{proposition}\label{HWprop}
Given \eqref{parode}, a surjective linear map $T$, as in \eqref{Tdef},  defines a linear lumping map for parameter $k^*$ if and only if 
\begin{equation}\label{kerx}
DF(x,k^*)\left(\ker T\right)\subseteq \ker T \text{  for all  }x\in\mathbb R^n.
\end{equation}
\end{proposition}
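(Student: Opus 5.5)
Both directions come down to one fact: a map on $\mathbb R^n$ factors through $T$ exactly when it is constant along the fibres $x+\ker T$. I will use this together with the Jacobian.

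\textbf{Necessity.} Assume \eqref{linlump} holds with some $G$, which is at least differentiable; in the polynomial setting $G$ is polynomial. Differentiating $TF(x,k^*)=G(Tx,k^*)$ with respect to $x$ gives
\[
T\,DF(x,k^*)=DG(Tx,k^*)\,T .
\]
Apply both sides to $v\in\ker T$. The right-hand side vanishes, so $T\,DF(x,k^*)v=0$, that is, $DF(x,k^*)v\in\ker T$ for all $x$. This is \eqref{kerx}.

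\textbf{Sufficiency.} Assume \eqref{kerx}. Fix $x$ and $v\in\ker T$, and set $\varphi(s)=TF(x+sv,k^*)$. Then
\[
\varphi'(s)=T\,DF(x+sv,k^*)v=0,
\]
so $\varphi$ is constant. Hence $TF$ is constant on each fibre $x+\ker T$, and therefore $TF=H\circ T$ for a well-defined map $H$ on $\mathbb R^e$, using surjectivity of $T$.

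It remains to check that $H$ is polynomial (respectively analytic), which I expect to be the only real obstacle. To handle it, choose a right inverse $S:\mathbb R^e\to\mathbb R^n$ with $TS=I_e$; this exists because $\mathrm{rank}\,T=e$. Define
\[
G(y,k^*):=TF(Sy,k^*).
\]
Then $G$ is polynomial, or analytic on the appropriate domain, because it is a composition of $F$ with linear maps. Moreover $x-STx\in\ker T$, since $T(x-STx)=Tx-Tx=0$. By the constancy along fibres just shown,
\[
G(Tx,k^*)=TF(STx,k^*)=TF(x,k^*),
\]
which is \eqref{linlump}.

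In the analytic case the domain needs a little care: the fibres should meet the domain in connected sets, for example a convex open domain, or the identity should be read locally. I will state this as a standing assumption. Finally, the remark on solution preservation follows from \eqref{linlump} by differentiating $Tz(t)$: $\tfrac{d}{dt}Tz(t)=TF(z(t),k^*)=G(Tz(t),k^*)$.
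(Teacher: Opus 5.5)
Your proof is correct, and its skeleton matches the paper's. The paper's step (i) is exactly your observation that $TF(\cdot,k^*)$ factors through $T$ if and only if it is constant on the fibres $x+\ker T$.

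The routes diverge in how fibre-constancy is tied to the Jacobian condition. The paper exploits that $F$ is polynomial:
\begin{itemize}
\item For necessity, it replaces $z$ by $\lambda z$ in $TF(x+z,k^*)=TF(x,k^*)$ and compares powers of $\lambda$. This gives $TD^jF(x,k^*)(z,\ldots,z)=0$ for every $j$, and $j=1$ is \eqref{kerx}.
\item For sufficiency, it differentiates $TDF(x,k^*)z=0$ in $x$. This kills every higher Taylor term, so the finite Taylor expansion yields fibre-constancy.
\end{itemize}
You instead use the chain rule on \eqref{linlump} for necessity, and the derivative of $s\mapsto TF(x+sv,k^*)$ along lines for sufficiency.

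Your argument is more elementary and more general. It needs only $C^1$ regularity of $F$ and a domain meeting each fibre in a connected set, a caveat you state explicitly. The paper's Taylor argument is tied to the finite expansion of a polynomial $F$ and reaches the analytic case only through a later remark. In exchange, the paper's version gives the stronger by-product that all higher-order terms $TD^jF(x,k^*)(z,\ldots,z)$ vanish on $\ker T$, which suits its algebraic viewpoint.

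You also make polynomiality of $G$ explicit via the right inverse $S$; the paper leaves this implicit in its ``well defined'' construction.

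One small point: in the necessity direction you need not assume $G$ differentiable. Identity \eqref{linlump} already forces $G(y,k^*)=TF(Sy,k^*)$. Alternatively, differentiate the constant function $s\mapsto TF(x+sv,k^*)=G(Tx,k^*)$ at $s=0$.
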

\begin{proof}
\begin{enumerate}[(i)]
\item We first show: There exists $G$ such that \eqref{linlump} is satisfied if and only if
\begin{equation}\label{xplusz}
TF(x+z,k^*)=TF(x,k^*)\quad \text{for all  } z\in\ker T,\,\text{  all  }x\in \mathbb R^n.
\end{equation}
Necessity of this condition is obvious from
\[
TF(x+z, k^*)=G(T(x+z),k^*)=G(Tx,k^*)=TF(x,k^*).
\]
Conversely, given $y\in \mathbb R^e$, there exists $x\in\mathbb R^n$ such that $y=Tx$ by surjectivity,  and with \eqref{xplusz} one sees that
\[
G(y,k^*):=TF(x,k^*)
\]
is well defined, and \eqref{linlump} holds.
\item 
To show equivalence of \eqref{xplusz} and \eqref{kerx}, we consider the Taylor expansion of \eqref{xplusz}, thus
\[
F(x+z,k^*)=F(x,k^*)+DF(x,k^*)z+\cdots+ \frac1{m!}D^mF(x,k^*)(z,\ldots,z),
\]
with $F$ of degree $m$.
Assuming $TF(x+z,k^*)=TF(x,k^*)$ for all $x\in \mathbb R^n$ and $z\in \ker T$, replace $z$ by $\lambda z$, $\lambda\in\mathbb R$ and compare degrees in $\lambda$ to see that all $TD^jF(x,k^*)(z,\ldots,z)=0$. For the reverse direction, let $z\in\ker T$ and differentiate $TDF(x,k^*)z=0$ with respect to $x$ to obtain

\[
TD^jF(x,k^*)(z,w_1,\ldots,w_{j-1})=0 \text{  for all  } w_1,\ldots, w_{j-1}\in\mathbb R^n; \quad 2\leq j\leq m,
\]
which implies $TD^jF(x,k^*)(z,\ldots,z)=0$ for all $j$.
\end{enumerate}
\end{proof}
\begin{remark} {\em  The statement and its proof remain valid for differential equations with analytic right hand side; in particular with rational right hand side.
The necessity of condition \eqref{kerx} was observed by Li and Rabitz \cite{LR1989}; later Li et al. \cite{LRT1994} also showed sufficiency.}
\end{remark}
The following -- equivalent -- criterion also goes back to Li and Rabitz \cite{LR1989}. It forms the basis for the computations in Ovchinnikov et al. \cite{OVPT2021}.
\begin{corollary}\label{HWcor}
Given \eqref{parode}, a surjective linear map $T$ defines a linear lumping map for the parameter $k^*$ if and only if\footnote{We denote the transpose by $\cdot ^{\rm tr}$.}
\begin{equation}\label{kerxtrans}
DF(x,k^*)^{\rm{tr}}\left({\rm im}\, T^{\rm tr}\right)\subseteq {\rm im}\, T^{\rm tr}\text{  for all  }x\in\mathbb R^n.
\end{equation}
\end{corollary}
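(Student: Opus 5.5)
The plan is to deduce Corollary \ref{HWcor} directly from Proposition \ref{HWprop}. Both statements are invariance conditions on the same family of linear maps, so it suffices to show, for a single linear map, that invariance of $\ker T$ under $A$ is equivalent to invariance of ${\rm im}\,T^{\rm tr}$ under $A^{\rm tr}$.

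Fix $x\in\mathbb R^n$ and put $A:=DF(x,k^*)\in\mathbb R^{n\times n}$. The pointwise claim is
\[
A(\ker T)\subseteq\ker T \iff A^{\rm tr}({\rm im}\,T^{\rm tr})\subseteq {\rm im}\,T^{\rm tr}.
\]
Applying this for every $x$ and invoking Proposition \ref{HWprop} then gives the corollary.

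The pointwise claim rests on two standard linear algebra facts, both with respect to the standard inner product on $\mathbb R^n$.
\begin{enumerate}[(a)]
\item ${\rm im}\,T^{\rm tr}=(\ker T)^\perp$. Indeed, $z\in\ker T$ if and only if $\langle T^{\rm tr}u,z\rangle=\langle u,Tz\rangle=0$ for all $u\in\mathbb R^e$. Hence $\ker T=({\rm im}\,T^{\rm tr})^\perp$, and taking orthogonal complements in finite dimension gives the assertion.
\item For any subspace $U\subseteq\mathbb R^n$, $A U\subseteq U$ holds if and only if $A^{\rm tr}U^\perp\subseteq U^\perp$. For the forward direction, let $w\in U^\perp$ and $u\in U$. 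Then $\langle A^{\rm tr}w,u\rangle=\langle w,Au\rangle=0$, since $Au\in U$, so $A^{\rm tr}w\in U^\perp$. The reverse direction follows by applying the same argument to $A^{\rm tr}$ and $U^\perp$, using $(U^\perp)^\perp=U$ and $(A^{\rm tr})^{\rm tr}=A$.
\end{enumerate}
Applying (b) with $U=\ker T$ and substituting (a) yields the pointwise claim.

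There is no real obstacle in this argument. The only points needing care are that $\perp$ is taken with respect to the standard inner product, so that the transpose is the adjoint, and that $(U^\perp)^\perp=U$ holds in finite dimension. The surjectivity hypothesis on $T$ is inherited unchanged from Proposition \ref{HWprop}; the equivalence in (a) does not use it.
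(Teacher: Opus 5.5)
Your proof is correct and is essentially the paper's argument: the paper also deduces the corollary from Proposition \ref{HWprop} by passing to orthogonal complements (``the dual space'') and using ${\rm im}\,T^{\rm tr}=(\ker T)^\perp$. You simply spell out the two linear-algebra facts, (a) and (b), that the paper leaves implicit.
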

\begin{proof}
From \eqref{kerx} we obtain the necessary and sufficient criterion
\begin{equation*}
DF(x,k^*)^{\rm tr}\left((\ker T)^\perp\right)\subseteq (\ker T)^\perp \text{  for all  }x\in\mathbb R^n
\end{equation*}
by passing to the dual space. The assertion follows with ${\rm im}\,T^{\rm tr}=(\ker T)^\perp$.
\end{proof}
\begin{remark}\label{rem:construct} {\em 
Proposition \ref{HWprop} and Corollary \ref{HWcor} provide access to a construction of lumping maps as follows: Given a subspace $W\subseteq \mathbb R^n$ that is invariant for all $DF(x,\,k^*)$, choose a basis $v_1^{\rm tr},\ldots, v_m^{\rm tr}$ of $W^\perp$  (viewed as a subspace of the row space $\mathbb R^{1\times n}$), and take $T$ as the matrix with rows $v_1^{\rm tr},\ldots, v_m^{\rm tr}$. This works because the column space of $T^{\rm tr}$ (i.e., the row space of $T$) equals $W^\perp$. The freedom of choice for the basis is reflected in Remark \ref{nonuniqrem}.}
\end{remark}

\begin{remark}\label{redcomprem}{\em 
The following observations open a path toward computing the reduced system: Let $\lambda_1(x),\ldots,\lambda_e(x)$ be the entries of $Tx$. Then there exists $G$ such that \eqref{linlump} holds if and only if the entries of $TF(x,k^*)$ are $\mathbb R$-linear combinations of monomials in $\lambda_1, \ldots,\lambda_e$. We state this in a more formal manner.
\begin{enumerate}
    \item 
Setting
$y_i=\lambda_i(x)$, $1\leq i\leq e$,
at a critical parameter $k^*$ there exist polynomials $\gamma_i$ such that
\[
\dot y_i=\lambda_i(F(x,k^*))=
\gamma_i(y_1,\ldots,y_e,k^*),\qquad 1\leq i\leq e.
\]

As noted in Li and Rabitz \cite{LR1989}, Section 2, equation (9), determining the $\gamma_i$ may be seen as a linear algebra problem in the finite dimensional space of polynomials of bounded degree.
\item One can take a different perspective: For a given critical parameter $k^*$ we have polynomials 
\[
\mu_j(x,k^*)=\lambda_j(F(x,k^*)),\qquad 1\leq j\leq e,
\]
and
\[
y_j:=\lambda_j(x),\qquad v_j:=\mu_j(x,k^*),\qquad 1\leq j\leq e.
\]
\[
\mu_j(x,k^*)=\lambda_j(F(x,k^*)),\quad 1\leq k\leq e.
\]
Now eliminate $x_1,\ldots,x_n$ via algorithmic algebra. (See for instance Cox et al.\ \cite{CLOS} for elimination algorithms.) This will yield the $\mu_j$ as polynomials $\gamma_j(y_1,\ldots,y_e,\,k^*)$, which form the right-hand side of the reduced system.
\item Alternatively, to recover a ``lumping-adapted'' version of the full system, complete $y_1,\ldots, y_e$ (e.g. by suitable $x_j$) with $y_{e+1},\ldots, y_n$ to a basis of $\mathbb R^{1\times n}$ and rewrite system \ref{parodes} in the new coordinates $y_1,\ldots,y_n$. (This requires to invert a matrix of size $n\times n$.) Since the coordinate change is applicable for all parameters, this procedure also yields a representation of the system when $k^*$ is perturbed by a small parameter.
\end{enumerate}
}
\end{remark}

\section{Lumping for generic reaction networks}\label{sec:genlump}
In the present section we will concentrate on mass action kinetics, but we add some observations on general kinetics in the penultimate  subsection. 
We distinguish lumping maps for a single specialization $\dot x=F(x,k^*)$ from simultaneous lumping maps that reduce $\dot x=F(x,k)$ for all $k$ in a nonempty open subset of parameter space. In the latter case we will speak of lumping maps for the reaction network, or -- to emphasize -- of the generic reaction network.
\subsection{Review of mass action networks }
We recall some basics about reaction networks; for more the reader is referred to the monograph \cite{Fein} by Feinberg.\\
A {\em mass-action chemical reaction network} $({\mathcal X},\,{\mathcal Y},{\mathcal R}, k)$ consists of the following ingredients:
\begin{itemize}
\item A finite set of {\em species} $\mathcal{X}=\{X_1,\dots,X_n\}$, with concentrations $x_1,\ldots,x_n$, respectively.
\item A finite set $\mathcal Y$ of {\em complexes}. By definition, every complex has the form
\[
 Y=\sum_{i=1}^n \alpha_{i}X_i ,\qquad \alpha_i\in\mathbb N_0, \ i=1,\ldots,n.
\]
\item A set ${\mathcal R}\subseteq {\mathcal Y}\times {\mathcal Y}$ of {\em reactions}. Formally,  a reaction is an ordered pair $(Y_j,Y_\ell)$ of complexes, but as usual we will write 
$
\ce{$Y_j$ -> $Y_\ell$}
$
to symbolize it.
\item To every reaction a nonnegative number $k_{\ell j}$, the {\em rate parameter}, is assigned; symbolically
\[
\ce{$Y_j$  ->[$k_{\ell j}$] $Y_\ell$}.
\]
 Thus a mass-action reaction network may be viewed as a directed graph with the complexes as vertices and the reactions as edges, which are labeled by the rate constants.
\item Because we assume mass action kinetics throughout, the time evolution of the concentrations in a single reaction 
\begin{equation*}
Y_1=m_1X_1+\cdots+m_nX_n, \quad Y_2=r_1X_1+\cdots+r_nX_n; \quad Y_1 \ce {->[$k$]} Y_2
\end{equation*}
is governed by the differential equation system
\begin{equation}\label{singlereac}
\frac{d}{dt}\begin{pmatrix} x_1\\ \vdots\\ x_n\end{pmatrix}=k\,\varphi(x)\,v;
\end{equation}
with 
\begin{equation}\label{singlereacplus}
\varphi(x)=x_1^{m_1}\cdots x_n^{m_n},\quad v=\begin{pmatrix} r_1-m_1\\ \vdots\\ r_n-m_n\end{pmatrix}=:\begin{pmatrix}\nu_1\\ \vdots\\ \nu_n\end{pmatrix}.
\end{equation}
To determine the time evolution of a reaction network, add up all the individual reaction terms on the right hand side. This {\em reaction equation system} therefore has the form
\begin{equation}\label{eq:ODE2Z}
    \dot x=\sum k_i\varphi_i(x)\,v_i.
\end{equation}
\item One calls a linear form $\mu$ a {\em stoichiometric first integral} of the reaction equation if it sends every $v_i$ to $0$. Since the $v_i$ have integer entries, it suffices to consider stoichiometric first integrals with integer coefficients.
\item Since all rate parameters are nonnegative, the positive orthant $\mathbb R_{\geq 0}^n$ is a positively invariant set for the reaction equations.
\end{itemize}
For a different representation of \eqref{eq:ODE2Z}, rename complexes as 
\[
Y_j=\sum_i y_{ij}X_i, 
\]
and write reactions as
\[
\ce{$Y_j$  ->[$k_{\ell j}$] $Y_\ell$}
\]
with rate constants $k_{\ell j}$.
Thus we obtain an equivalent version of the reaction equation system with the following ingredients:
\begin{itemize}
\item The \emph{complex matrix}, defined as
\begin{equation*}\label{massacmat}
Y=\left(y_{ij}\right)_{1\le i\le n,\, 1\le j\le d} \ \in \mathbb R^{n\times d}, 
\end{equation*}
thus it consists  of the stoichiometric coefficients of the complexes. Let $y_1,\dots,y_d$ denote its columns. 
\item The \emph{Laplacian matrix} { $A(k) = (a_{ij})_{1\le i,j\le d}\in \mathbb R^{d\times d}$}  has entries
\[ a_{ij} =k_{ij} \text{  whenever  } i\neq j,  \text{  and  }a_{jj} = -\sum_{\ell:\,j\neq \ell} {k_{\ell j}},\qquad \textrm{for }\quad i,j=1,\ldots,d, \]
where $k_{ij}=0$ if there is no reaction $Y_j\rightarrow Y_i$. 
\item Moreover abbreviate

\begin{equation*}
x^Y:=\left( \prod_{1\leq i\leq n} x_i^{y_{ij}}\right)_{1\leq j\leq d}.
\end{equation*}

\end{itemize}
Then system \eqref{eq:ODE2Z} can be restated ín the form
\begin{equation}\label{eq:ODE2A}
\dot{x}=  Y A(k)\,  x^Y, \qquad x\in \mathbb R^n_{\geq 0}.
\end{equation}

\subsection{Lumpings of a single reaction}
We consider a single reaction, governed by equation \eqref{singlereac}. With no loss of generality one may set $k=1$ in this case, and we will abbreviate the right hand side by $F(x)$.
According to Proposition \ref{HWprop} we need to determine all subspaces of $\mathbb R^n$ that are stable under every Jacobian $DF(x), \, x\in\mathbb R^n$. Now 
\[
DF(x)=v\,D\varphi(x)=\phi(x)\begin{pmatrix}\nu_1\\ \vdots\\ \nu_n\end{pmatrix}\,\begin{pmatrix}\dfrac{m_1}{x_1},&\cdots,&\dfrac{m_n}{x_n}\end{pmatrix}.
\]
\begin{proposition}\label{prop:onereac} Let $W\subseteq\mathbb R^n$ be a subspace that is stable under every Jacobian $DF(x)$. Then one of the following holds:
\begin{itemize}
\item $W$ is of {\em Type 1}:  $W\subseteq \sum_{i:\,m_i=0} \mathbb Re_i$. (Note that the sum extends over all indices for which species $X_i$ is not a reactant.)
\item $W$ is of {\em Type 2}:  $v\in W$.
\end{itemize}
Conversely, all subspaces of Types 1 or 2 are stable under every Jacobian.
\end{proposition}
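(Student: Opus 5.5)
The Jacobian has rank at most one: $DF(x)=v\,D\varphi(x)$, with $D\varphi(x)$ a row vector. So for any $w\in\mathbb R^n$,
\[
DF(x)w=\bigl(D\varphi(x)w\bigr)\,v ,
\]
where the scalar factor is the directional derivative of $\varphi$ at $x$ along $w$. The proof rests on this identity, and I would argue in both directions through it.

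\emph{Converse.} If $v\in W$, then $DF(x)w$ is a scalar multiple of $v$, hence lies in $W$, for every $x$ and every $w\in W$; so Type 2 subspaces are stable. If $W\subseteq\sum_{i:\,m_i=0}\mathbb Re_i$, then $\varphi$ does not depend on the variables $x_i$ with $m_i=0$. Hence $\partial\varphi/\partial x_i\equiv 0$ for these indices, so $D\varphi(x)w=0$ for all $w\in W$. Thus $DF(x)W=0\subseteq W$, and Type 1 subspaces are stable. Working with $D\varphi$ itself, rather than with the formal expression $m_i\varphi/x_i$, avoids any trouble on coordinate hyperplanes.

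\emph{Main direction.} Let $W$ be stable and suppose $v\notin W$. If $w\in W$ and $x$ is such that $D\varphi(x)w\neq 0$, then $v=(D\varphi(x)w)^{-1}DF(x)w\in W$, a contradiction. So $D\varphi(x)w=0$ for all $x\in\mathbb R^n$ and all $w\in W$. Now expand
\[
D\varphi(x)w=\sum_i w_i\,\frac{\partial\varphi}{\partial x_i}(x)=\sum_{i:\,m_i>0} w_i\,m_i\,x_1^{m_1}\cdots x_i^{m_i-1}\cdots x_n^{m_n}.
\]
This is a polynomial identity in $x$, and the monomials appearing for distinct $i$ with $m_i>0$ are pairwise distinct, since their exponent vectors differ exactly in the $i$-th slot being lowered. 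Linear independence of monomials then forces $w_i m_i=0$, so $w_i=0$ whenever $m_i>0$. Hence $w\in\sum_{i:\,m_i=0}\mathbb Re_i$, and $W$ is of Type 1.

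The only step needing care is this distinct-monomials argument. I would also note the degenerate case where all $m_i=0$ (a constant-rate inflow reaction): then every subspace is of Type 1 and the claim holds trivially. A possible $v=0$ likewise makes every subspace Type 2, so there is no issue there either.
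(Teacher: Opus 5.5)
Your proof is correct and follows the paper's argument. Both use the rank-one factorization $DF(x)=v\,D\varphi(x)$ to force $v\in W$ unless $D\varphi(x)W=\{0\}$ for all $x$, and then read off $w_i=0$ whenever $m_i\neq 0$ by a coefficient comparison. The one cosmetic difference is that you compare coefficients of the distinct monomials in the polynomial $D\varphi(x)w$, whereas the paper uses the rational functions $m_iw_i/x_i$ on $\{x_1\cdots x_n\neq 0\}$; this is equivalent, and your version also makes the Type 1 converse explicit.
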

\begin{proof}
If there is some $w\in W$ such that $D\varphi(x)w\not=0$, then $v\in W$, so we have Type 2. On the other hand, every subspace that contains $v$ is obviously stable under all $DF(x)$. Otherwise, $D\varphi(x)w=0$ for all $x$, equivalently 
\[
\sum m_iw_i/x_i=0 \text{  for all  } x_1\not=0,\ldots,x_n\not=0.
\]
This sum of rational functions is identically zero if and only if every $m_iw_i/x_i=0$. Equivalently, $w_i=0$ whenever $m_i\not=0$ or, in other words, $w\in \sum_{i:\,m_i=0} \mathbb Re_i$.
\end{proof}
To obtain the reducing maps, determine $W^\perp\subseteq \mathbb R^{1\times n}$ according to  Remark \ref{rem:construct}. Note that Type 1 lumpings need not exist for single reactions, but Type 2 lumpings exist whenever $n\geq 2$. Figure~\ref{fig:type1_type2} illustrates the 
geometric meaning of these two types.
\begin{figure}[htbp]
\centering
\includegraphics[width=0.95\textwidth]{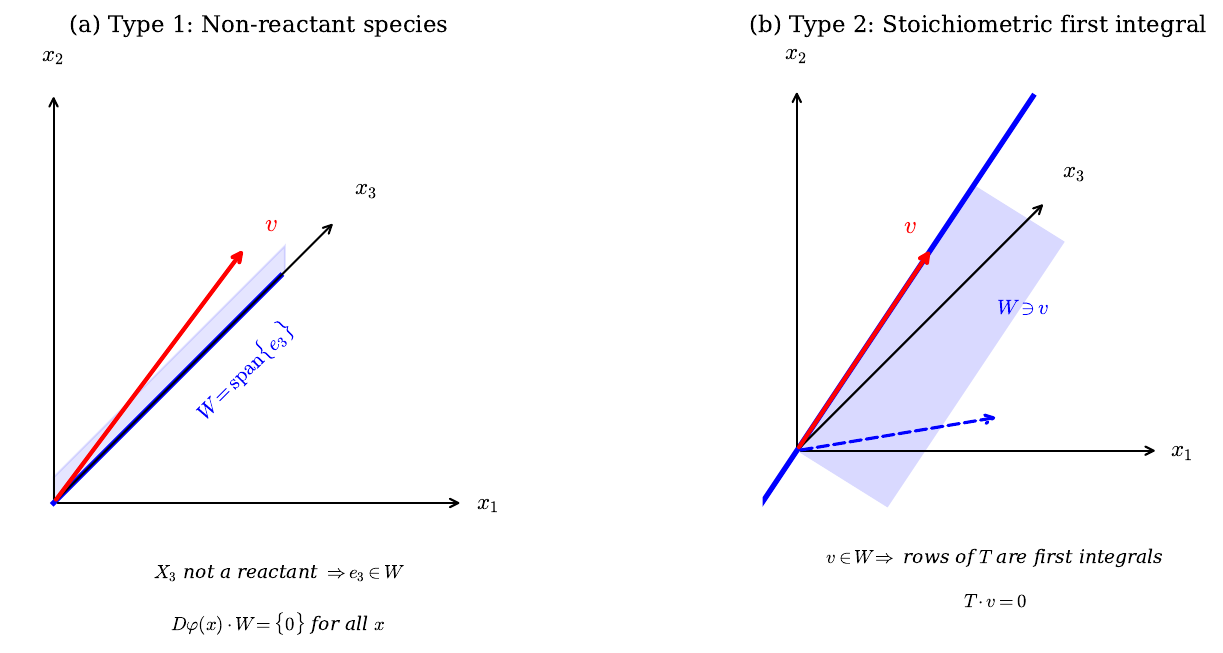}
\caption{Geometric interpretation of Type~1 and Type~2 invariant subspaces 
for a single reaction $X_1+X_2\to X_3$. (a)~Type~1: The subspace $W$ is spanned by 
non-reactant species; here $X_3$ is not a reactant, so $e_3 \in W$. 
The reaction vector $v$ lies outside $W$. The condition 
$D\varphi(x) \cdot W = \{0\}$ holds for all $x$. 
(b)~Type~2: The subspace $W$ contains the reaction vector $v$. 
The rows of the lumping matrix $T$ define stoichiometric first integrals, 
satisfying $T \cdot v = 0$. }
\label{fig:type1_type2}
\end{figure}

\begin{corollary} Let $T$ be a lumping map for system \eqref{singlereac}.
\begin{itemize}
\item Assume that the map corresponds to a subspace $W$ of Type 1; and w.l.o.g. let $X_{p+1},\ldots,X_n$ be the non-reactant species. Then up to Gauss row operations (cf.~Remark \ref{nonuniqrem}) one has ${\rm rank}\,T\geq p$, and 
\[
Tx=\begin{pmatrix} x_1\\ \vdots\\ x_p\end{pmatrix},\quad\text{when  } {\rm rank}\,T=p
\]
or 
\[
Tx=\begin{pmatrix} x_1\\ \vdots\\ x_p\\ \mu_{p+1}\\ \vdots\\ \mu_e\end{pmatrix},\quad\text{when  } {\rm rank}\,T=e>p,
\]
with linear forms $\mu_{p+1},\ldots,\mu_e$ that are subject only to the rank condition. If a lumping map of rank $e>p$ exists, then there also exists a lumping map of rank $p$.
Conversely, every map of the type above is a lumping map. 
\item The linear map $T$ is a lumping map that corresponds to a subspace of Type 2 if, and only if, every row of $T$ is a stoichiometric first integral of system \eqref{singlereac}.
\end{itemize}
\end{corollary}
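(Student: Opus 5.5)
The plan is to translate Proposition \ref{prop:onereac} into statements about $T$ via Remark \ref{rem:construct}. By Proposition \ref{HWprop}, $T$ is a lumping map exactly when $W:=\ker T$ is stable under every $DF(x)$. Since ${\rm im}\,T^{\rm tr}=W^\perp$, the row space of $T$ is $W^\perp$. So both bullet points reduce to describing $W^\perp$ for subspaces of Type 1 and Type 2.

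For Type 1, set $U:=\sum_{i>p}\mathbb R e_i$, the span of the non-reactant directions. The condition $W\subseteq U$ is equivalent to $U^\perp\subseteq W^\perp$. Here $U^\perp$ is spanned by the coordinate forms $x_1,\ldots,x_p$. Hence the row space of $T$ contains $x_1,\ldots,x_p$, and so ${\rm rank}\,T\geq p$. I would then apply Gauss row operations (Remark \ref{nonuniqrem}) to make the first $p$ rows exactly $x_1,\ldots,x_p$, and complete them by further forms $\mu_{p+1},\ldots,\mu_e$ to a basis of $W^\perp$. When $e=p$ the row space equals $U^\perp$, which gives the first displayed form.

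For the converse, take any $T$ of this form. Its kernel $W$ is annihilated by $x_1,\ldots,x_p$, so $W\subseteq U$. Then $W$ is of Type 1, and Proposition \ref{prop:onereac} shows it is Jacobian-stable, so $T$ is a lumping map by Proposition \ref{HWprop}; the $\mu_j$ are constrained only by the rank requirement. The existence of a rank-$p$ lumping map follows because the map $x\mapsto(x_1,\ldots,x_p)$ has kernel exactly $U$, which is of Type 1. One small point to note: this requires $p<n$ for the map to be a genuine reduction. That holds precisely when a Type 1 subspace $W\neq 0$ exists.

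For Type 2, I would use that $v\in W=\ker T$ holds iff $Tv=0$, i.e. iff every row $\mu$ of $T$ satisfies $\mu(v)=0$. For a single reaction this is exactly the definition of a stoichiometric first integral. If $T$ is a lumping map whose kernel is of Type 2, every row is therefore such a first integral. Conversely, if every row is a stoichiometric first integral, then $v\in\ker T$, so $\ker T$ is of Type 2 and hence stable by Proposition \ref{prop:onereac}. Proposition \ref{HWprop} then shows $T$ is a lumping map.

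I do not expect a real obstacle. The only care needed is in the bookkeeping for the duality $W\subseteq U\iff U^\perp\subseteq W^\perp$. I also need to check that the row reduction in the Type 1 case preserves surjectivity of $T$ and the rank count.
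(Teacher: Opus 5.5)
Your proposal is correct and follows the paper's argument: for Type 1 you pass to $W^\perp$, the row space of $T$, to see that $x_1,\ldots,x_p$ lie in it, and for Type 2 you read $v\in\ker T$ as $Tv=0$, i.e.\ every row is a stoichiometric first integral. The only small difference is in the Type 1 converse. There the paper directly observes that $\varphi$ depends only on $x_1,\ldots,x_p$, so discarding the equations for non-reactant species leaves a closed system; you instead go back through Proposition \ref{prop:onereac} and Proposition \ref{HWprop}. Both routes are valid.
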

\begin{proof}
\begin{itemize}
\item For Type 1, $W$ is a subspace of $W_{\rm max}=\sum_{i=p+1}^n \mathbb Re_i$, and consequently $W^\perp$ contains $\sum_{i=1}^p  \mathbb R x_i$. Obviously, for any subset of non-reactant species, by discarding some equations $\dot x_i=\varphi(x)\nu_i$, with $i\geq p+1$,  one obtains a differential equation system of smaller dimension. 
\item For Type 2, any reducing map $T$ sends $v$ to $0$; in other words, every row of $T$ defines a stoichiometric first integral of \eqref{singlereac}.  
\end{itemize}
\end{proof}
\begin{remark}{\em 
\begin{itemize}
\item Given Type 1, the essential part of the reduction corresponds to the rank $p$ case, with subspace $W_{\rm max}$ and
\[
Tx=\begin{pmatrix} x_1\\ \vdots\\ x_p\end{pmatrix}.
\]
Solving the remaining equations for $x_{p+1},\ldots,x_n$ amounts to quadratures.
\item One can describe $T$ with a  Type 2 subspace in more detail. Assuming (w.l.o.g.) that $\nu_1\not=0$, and setting
\[
T_{\rm max}=\begin{pmatrix} \nu_2&-\nu_1&0&\cdots&0\\
                                              \nu_3&0&-\nu_1&\ddots & \vdots\\
                                            \vdots &\vdots&\ddots&\ddots&0\\
                                               \nu_n&0&\cdots&0&-\nu_1
\end{pmatrix},
\]
one has $T=A\cdot T_{\rm max}$, where $A\in \mathbb R^{e\times (n-1)}$ is of rank $e\leq n-1$. 
\item The subspaces from Types 1 and 2 may have nontrivial intersection, although this is the case only for rather special reactions: Let $m_1>0,\ldots,m_p>0$ and consider
\begin{equation*}
m_1X_1+\cdots+m_pX_p  \ce {->[k]} m_1X_1+\cdots+m_pX_p+r_{p+1}X_{p+1}+\cdots+r_nX_n.
\end{equation*}
Thus, for every $j\leq p$ the species $X_j$ is either not involved in the reaction (when $m_j=0$) or its concentration is unchanged by the reaction (so it acts solely as a catalyst). We have 
\[v=\begin{pmatrix} 0\\ \vdots\\0\\r_{p+1}\\ \vdots\\r_n\end{pmatrix}\subseteq \mathbb Re_{p+1}+\cdots+\mathbb Re_n.\]
 One verifies that this is (up to labeling) the only scenario where the subspaces have nontrivial intersection.
\end{itemize}}
\end{remark}

\subsection{Lumpings of reaction networks}
We now consider a mass action network with reactions $\mathcal R_1,\ldots,\mathcal R_d$, and dynamics described by the differential equation
\begin{equation}\label{manyreac}
\dot x=F(x,k):= \sum_{i=1}^d k_i\,\varphi_i(x) v_i,
\end{equation}
with each summand of the form in \eqref{singlereac} and \eqref{singlereacplus}. 
\subsubsection{The genericity condition}
Consider the set $\Pi$ of admissible rate parameters $k=\begin{pmatrix} k_1\\ \vdots\\ k_d\end{pmatrix}$ for the reaction network. (Admissibility depends on the context of the problem: $\Pi$ may contain just one element when the rate constants are known precisely; on the other hand, consideration of $\Pi=\mathbb R_+^d$ means that all reactions with the given graph are considered.) As noted earlier, we call a network {\em generic} if $\Pi$ contains a nonempty open subset\footnote{It would suffice to require this subset to be Zariski-dense.} of $\mathbb R^d$. From an applied perspective, such a condition may reflect that the parameters are known only within some error.\footnote{However, compare the comments on approximate lumpings at the beginning of section \ref{sec:critpar}.} \\
Here we are interested in {\em lumping maps for the generic  network}, which do not depend on the rate parameters.
\begin{proposition}\label{prop:all}
Let $T$ be a linear lumping map for \eqref{manyreac} in a generic setting, for all admissible parameters $k$. Then $T$ is a lumping map for all equations $\dot x=\varphi_i(x)v_i$, $1\leq i\leq d$.
\end{proposition}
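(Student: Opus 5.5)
The plan is to reduce the statement to a linearity argument in the parameters, using the Jacobian criterion of Proposition \ref{HWprop}. Since $T$ is a lumping map for every admissible $k$, we have for all $k\in\Pi$ and all $x\in\mathbb R^n$
\[
T\,DF(x,k)\,(\ker T)=\{0\}, \qquad DF(x,k)=\sum_{i=1}^d k_i\, v_i\, D\varphi_i(x).
\]
Fix $z\in\ker T$ and $x\in\mathbb R^n$. Then the map
\[
k\mapsto \sum_{i=1}^d k_i\, T v_i\, D\varphi_i(x) z
\]
is linear in $k$ (with values in $\mathbb R^e$). It vanishes on $\Pi$, and $\Pi$ contains a nonempty open subset $U$ of $\mathbb R^d$.

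The key step is to show that a linear map vanishing on a nonempty open set vanishes identically. Pick $k^0\in U$. Then $k^0+\varepsilon e_i\in U$ for small $\varepsilon>0$. Subtracting the two identities and dividing by $\varepsilon$ gives
\[
T v_i\, D\varphi_i(x) z=0 \qquad\text{for each } i.
\]
Alternatively one can argue that a polynomial vanishing on a Zariski-dense set is zero, which also covers the footnoted weaker hypothesis. Since $x$ and $z$ were arbitrary, we obtain $T\,D(\varphi_i v_i)(x)(\ker T)=\{0\}$ for all $x$, that is, $\ker T$ is invariant under $D(\varphi_i v_i)(x)$ for every $i$ and every $x$.

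Invoking Proposition \ref{HWprop} again, now for the single-reaction system $\dot x=\varphi_i(x)v_i$ with the same surjective $T$, yields the claim. If $\Pi$ is only assumed to contain an open set in $\mathbb R^d_+$, nothing changes, since the argument is local around $k^0$.

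I expect no serious obstacle here. The only care needed is to note that the invariance condition is linear in $k$ for each fixed $(x,z)$, so that separation of the coefficients $k_i$ is legitimate. This rests on the rate parameters entering \eqref{manyreac} linearly and independently, i.e., each $k_i$ multiplies exactly one reaction term.
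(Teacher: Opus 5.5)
Your proof is correct and is essentially the paper's argument. Both take an interior point of $\Pi$, perturb it along a coordinate direction $e_j$, use linearity of $DF(x,k)$ in $k$ to get invariance of $\ker T$ under each single-reaction Jacobian, and then invoke Proposition~\ref{HWprop}; the only differences are cosmetic (you phrase invariance as $T\,DF(x,k)\,z=0$ rather than $DF(x,k)w\in W$, and your difference quotient is exact, so no $o(\varepsilon)$ term is needed).
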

\begin{proof} We denote the standard basis of $\mathbb R^d$ by $e_1,\ldots,e_d$.
With $W=\ker T$ we have $DF(x,k)W\subseteq W$ for all admissible $k$. Let $\widehat k$ be an interior point of $\Pi$, and $1\leq j\leq d$. Then for all sufficiently small $\varepsilon$, and any $w\in W$ we have 
\[
 DF(x,\widehat k+\varepsilon e_j)w-DF(x,\widehat k)w  \in W.
\]
With
\[
 DF(x,\widehat k+\varepsilon e_j)w-DF(x,\widehat k)w = \left(\varepsilon DF(x,e_j)w+o(\varepsilon)\right)w,
\]
with $o(\varepsilon)$ standing for terms of order $>1$, this implies $DF(x,e_j)w\in W$.
\end{proof}
\begin{remark}{\em 
This Proposition also applies to Ovchinnikov et al. \cite{OVPT2021} because in their algorithm the lumping maps under consideration only act on variables.}
\end{remark}
\subsubsection{Characterization of lumping maps}
By Proposition \ref{prop:all}, obtaining a subspace $W$ such that $DF(x,k)W\subseteq W$ for all $x$ and all $k$, means to find a subspace $W$ that is of Type 1, or Type 2, for every reaction in the network. \\
In the settings with all subspaces of Type 2 the rows of $T$ determine common stoichiometric first integrals for all reactions. \\
The other settings require a more detailed investigation.
\begin{proposition}\label{prop:mixlump}
Let $T$ be a linear lumping map for system \eqref{manyreac}, and $W=\ker T$ the corresponding joint invariant subspace for all Jacobians. 
\begin{enumerate}[(a)]
    \item 
 If $W$ is (w.l.o.g) of Type 1 for reactions $\mathcal R_1,\ldots,\mathcal R_{d^*}$, $1\leq d^*\leq d$, with common non-reactant species $X_{p+1},\ldots,X_n$, and of Type 2 for the remaining ones (if any), then ${\rm rank}\,T\geq p$, and one of the following holds (up to modifications by Gauss row operations; see Remark \ref{nonuniqrem}):
\begin{enumerate}[(i)]
\item If $T$ has rank $p$, then
\[
Tx=\begin{pmatrix}x_1\\ \vdots\\ x_p\end{pmatrix}\text{  and  } v_j=\begin{pmatrix} 0\\ \widehat v_j\end{pmatrix},\, \widehat v_j\in \mathbb R^{n-p} \text{  for all  } j>d^*.
\]
Thus $x_1,\ldots,x_p$ are first integrals for every reaction that corresponds to a Type 2 subspace.
\item If $T$ has rank $e>p$, then
there exist linear forms $\mu_{p+1},\ldots,\mu_e$ such that
\[
Tx=\begin{pmatrix}x_1\\ \vdots\\ x_p\\ \mu_{p+1}\\ \vdots\\ \mu_e\end{pmatrix}\text{  and  } v_j=\begin{pmatrix} 0\\ \widehat v_j\end{pmatrix},\, \widehat v_j\in \mathbb R^{n-p} \text{  for all  } j>d^*,
\]
and furthermore all $\mu_j(v_{\ell})=0$, $p+1\leq j\leq e,\, \ell>d^*$. Thus $x_1,\ldots,x_p,\,\mu_{p+1},\ldots,\mu_e$ are first integrals for every reaction that corresponds to a Type 2 subspace. Conversely the conditions on the $\mu_i$ are sufficient for a lumping map.
\end{enumerate}
\item In scenario (ii) above
\[
\widetilde Tx=\begin{pmatrix}x_1\\ \vdots\\ x_p\end{pmatrix}
\]
defines a lumping map of rank $p$.
\item Moreover in scenario (ii) above, there exist polynomials $g_{p+1},\ldots,g_e$ such that
\[
\frac{d\mu_j}{dt}= g_j(x_1,\ldots,x_p),\quad p+1\leq j\leq e.
\]
\end{enumerate}
\end{proposition}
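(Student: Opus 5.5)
The plan is to translate everything into statements about $W=\ker T$ and then read off the rows of $T$ via $W^\perp$, as in Remark \ref{rem:construct}. We work in the generic setting, so Proposition \ref{prop:all} applies: $W$ is invariant under the Jacobian of each single reaction $\dot x=\varphi_i(x)v_i$. By Proposition \ref{prop:onereac}, $W$ is therefore, for each reaction, of Type 1 or of Type 2.

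For part (a), first use Type 1 for $\mathcal R_1,\ldots,\mathcal R_{d^*}$. For each such reaction, $W$ lies in the span of the $e_i$ with $X_i$ not a reactant of that reaction. Intersecting these spans over $i\le d^*$ gives
\[
W\subseteq W_{\max}:=\sum_{i=p+1}^n\mathbb R e_i ,
\]
because $X_{p+1},\ldots,X_n$ are exactly the common non-reactant species. Passing to orthogonal complements, $W^\perp\supseteq W_{\max}^\perp$, and $W_{\max}^\perp$ is spanned by the coordinate forms $x_1,\ldots,x_p$. Since the row space of $T$ equals $W^\perp$, this gives ${\rm rank}\,T\ge p$. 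By Gauss row operations (Remark \ref{nonuniqrem}) we may take $x_1,\ldots,x_p$ as the first rows and complete them by linear forms $\mu_{p+1},\ldots,\mu_e$; we may even reduce these so that they involve only $x_{p+1},\ldots,x_n$. For $j>d^*$, Type 2 gives $v_j\in W\subseteq W_{\max}$, so $v_j=(0,\widehat v_j)$. Moreover every row of $T$ annihilates $W$, hence $v_j$. This yields $\mu_i(v_\ell)=0$ for $\ell>d^*$, and also shows that $x_1,\ldots,x_p$ are first integrals of these reactions. Case (i) is the special case $e=p$.

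For the converse in (ii), assume $T$ has the stated form. Then $W=\ker T\subseteq W_{\max}$, so $W$ is of Type 1 for every $\mathcal R_i$ with $i\le d^*$. Also $v_\ell\in\ker T=W$ for $\ell>d^*$, since all rows of $T$ vanish on $v_\ell$, so $W$ is of Type 2 for the remaining reactions. Proposition \ref{prop:onereac} then makes $W$ invariant under each summand $k_i\varphi_i(x)v_i$ of the Jacobian, hence under $DF(x,k)$. Proposition \ref{HWprop} finishes the argument.

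Part (b) is the same converse applied to $W_{\max}$ itself. It lies in the common non-reactant span, giving Type 1 for $i\le d^*$. It contains every $v_j$ with $j>d^*$, because their first $p$ entries vanish, giving Type 2 for the rest. Its orthogonal complement is spanned by $x_1,\ldots,x_p$, which defines $\widetilde T$.

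For part (c), compute directly:
\[
\dot\mu_j=\mu_j(F(x,k))=\sum_{i\le d^*}k_i\varphi_i(x)\mu_j(v_i),
\]
since $\mu_j(v_\ell)=0$ for $\ell>d^*$. Each monomial $\varphi_i$ with $i\le d^*$ involves only reactant species of $\mathcal R_i$, and these are among $X_1,\ldots,X_p$. Hence $\dot\mu_j=g_j(x_1,\ldots,x_p)$.

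The main point requiring care is the first step of (a). We must check that "Type 1 for each of $\mathcal R_1,\ldots,\mathcal R_{d^*}$ separately" really forces $W$ into the span of the \emph{common} non-reactants, i.e. that the intersection of coordinate subspaces is the coordinate subspace of the intersection of index sets. We must also check that the row reduction can be done without destroying the coordinate rows. Both are elementary, but the rest of the argument rests on them.
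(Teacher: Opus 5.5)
Your proposal is correct and follows the paper's proof: both place $W$ inside $\sum_{i>p}\mathbb R e_i$ using the Type 1 conditions, pass to the dual space to get the form of $T$, use that every row of $T$ annihilates the Type 2 reaction vectors, and obtain (c) because the Type 1 summands depend only on $x_1,\ldots,x_p$. The only difference is cosmetic. For (b) and the converse you verify joint invariance via Propositions \ref{prop:onereac} and \ref{HWprop}, whereas the paper reads (b) off directly from the form of the summands.
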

\begin{proof}
Here $W$ is a subspace of $\mathbb R e_{p+1}+\cdots+\mathbb R e_n$, as seen from Propositions \ref{prop:onereac} and \ref{prop:all}, and the assertion on the form of $T$ in part (a) follows by passing to the dual space. The remaining statements for (a) follow because every row of $T$ determines a stoichiometric first integral for each of the reactions $\mathcal R_j$, $j>d^*$. For parts (b) and (c), note that every summand $k_i\varphi_i(x)v_i$, $1\leq i\leq d^*$ (corresponding to Type 1) depends only on $x_1,\ldots,x_p$.
\end{proof}
 This leads us to a practical (if perhaps disappointing) conclusion.
\begin{corollary}\label{cor:mixlump}
    Let $T$ be a linear lumping map, and $W=\ker T$ the corresponding joint invariant subspace for all Jacobians. If this subspace is of Type 1 for some reaction, and the rank of $T$ is minimal, then there exist indices $j_1,\ldots,j_p$ such that 
    \[
Tx=\begin{pmatrix} x_{j_1}\\  \vdots\\ x_{j_p}\end{pmatrix},
\]
up to Gauss row operations.
\end{corollary}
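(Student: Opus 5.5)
The plan is to reduce the statement to Proposition \ref{prop:mixlump} and then exploit minimality of the rank. First, as in the discussion following Proposition \ref{prop:all}, $W=\ker T$ is, for each reaction $\mathcal R_i$, either of Type 1 or of Type 2 in the sense of Proposition \ref{prop:onereac}. Relabel so that $W$ is of Type 1 exactly for $\mathcal R_1,\ldots,\mathcal R_{d^*}$ with $d^*\geq 1$; this is possible by hypothesis. Then relabel the species so that the common non-reactant species of these reactions are $X_{p+1},\ldots,X_n$. By Proposition \ref{prop:onereac}, $W\subseteq \sum_{i>p}\mathbb Re_i$. Proposition \ref{prop:mixlump}(a) then gives ${\rm rank}\,T\geq p$, and, after Gauss row operations, the first $p$ entries of $Tx$ are $x_1,\ldots,x_p$.

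Next I use the minimality of the rank. If ${\rm rank}\,T=e>p$, we are in scenario (ii), and Proposition \ref{prop:mixlump}(b) shows that $\widetilde Tx=(x_1,\ldots,x_p)^{\rm tr}$ is a lumping map of rank $p<e$. This contradicts minimality. Hence ${\rm rank}\,T=p$, scenario (i) applies, and $Tx=(x_1,\ldots,x_p)^{\rm tr}$ up to Gauss row operations (Remark \ref{nonuniqrem}). Undoing the relabeling of species, this is exactly $Tx=(x_{j_1},\ldots,x_{j_p})^{\rm tr}$ for suitable indices $j_1,\ldots,j_p$.

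The main point to check is the meaning of ``minimal''. The argument above uses it among lumping maps whose kernel is of Type 1 for the same set of reactions, and the competitor $\widetilde T$ has kernel $W_{\max}=\sum_{i>p}\mathbb Re_i\supseteq W$. This $W_{\max}$ is still of Type 1 for $\mathcal R_1,\ldots,\mathcal R_{d^*}$. It is also of Type 2 for the remaining reactions, because scenario (i)/(ii) gives $v_j\in W_{\max}$ for $j>d^*$. So $\widetilde T$ lies in the class under comparison, and the argument goes through under either reading of minimality. I would also remark that the rank cannot drop below $p$ within this class, since $W$ must lie in $W_{\max}$. Consequently the minimal-rank map is exactly the projection onto the reactant coordinates of the Type 1 reactions.
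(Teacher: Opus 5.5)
Your argument is correct and matches the paper, which states the corollary as an immediate consequence of Proposition~\ref{prop:mixlump}: under rank minimality, part (b) rules out scenario (ii), so scenario (i) holds and $Tx=(x_1,\ldots,x_p)^{\rm tr}$ after relabeling and Gauss row operations. Your two additions are also sound: putting every reaction for which $W$ is of Type 1 into the first group is what makes part (a) applicable, and $\widetilde T$ indeed remains in the comparison class, with kernel $\sum_{i>p}\mathbb Re_i$ of Type 1 for $\mathcal R_1,\ldots,\mathcal R_{d^*}$ and of Type 2 for the remaining reactions.
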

    Generally one may prefer lumpings of minimal rank. But the statements in part (a), scenario (ii) about lumpings of non-minimal rank may be relevant for constrained lumpings.\\
    We note a few observations regarding Proposition \ref{prop:mixlump}.
\begin{remark}{\em 
\begin{enumerate}[(a)]
    \item The simplest scenario appears when only Type 1 subspaces are involved. Then reduction amounts to eliminating common non-reactant species.
    \item We take  a closer look when Type 2 subspaces are also involved for some reactions. As above we may assume that $W\subseteq \mathbb R e_{p+1}+\cdots +\mathbb R e_n$ due to the Type 1 subspaces. Now consider a reaction
    \[
    \sum m_iX_i\rightharpoonup \sum r_iX_i; \quad \text{  with  }v=\begin{pmatrix}
        r_1-m_1\\ \vdots\\ r_p-m_p\\r_{p+1}-m_{p+1}\\ \vdots \\r_n-m_n
    \end{pmatrix}
    \]
    that corresponds to a Type 2 subspace. Then necessarily $r_1=m_1,\ldots,r_p=m_p$ and the reaction has the detailed form
    \[
    \sum_{i=1}^n m_i X_i\rightharpoonup \sum_{i=1}^p m_iX_i+\sum_{i=p+1}^n r_iX_i.
    \]
    Thus, for every $j\leq p$ the species $X_j$ is either not involved in the reaction or it acts solely as a catalyst.
    \item In some instances, there may be a choice between Type 1 and Type 2, and the latter may yield a lumping of smaller rank. As a simple example consider
    \[
    X_1+X_2\rightharpoonup X_3,\quad X_4+X_5\rightharpoonup X_6.
    \]
    For the first reaction take the Type 1 subspace $W_1=\sum_{i\geq 3}\mathbb R e_i$. Then one may choose the Type 1 subspace $W_2=\mathbb R e_6+\sum_{i\leq 3}\mathbb  R e_i$ for the second reaction, and obtain a reduction to dimension four (eliminating the common non-reactant species $X_3$ and $X_6$). But on the other hand, $W_1$ is a Type 2 subspace for the second reaction, and thus one obtains a reduction to dimension two.
\end{enumerate}
}
\end{remark}

\begin{figure}[htbp]
\centering
\includegraphics[width=1\textwidth]{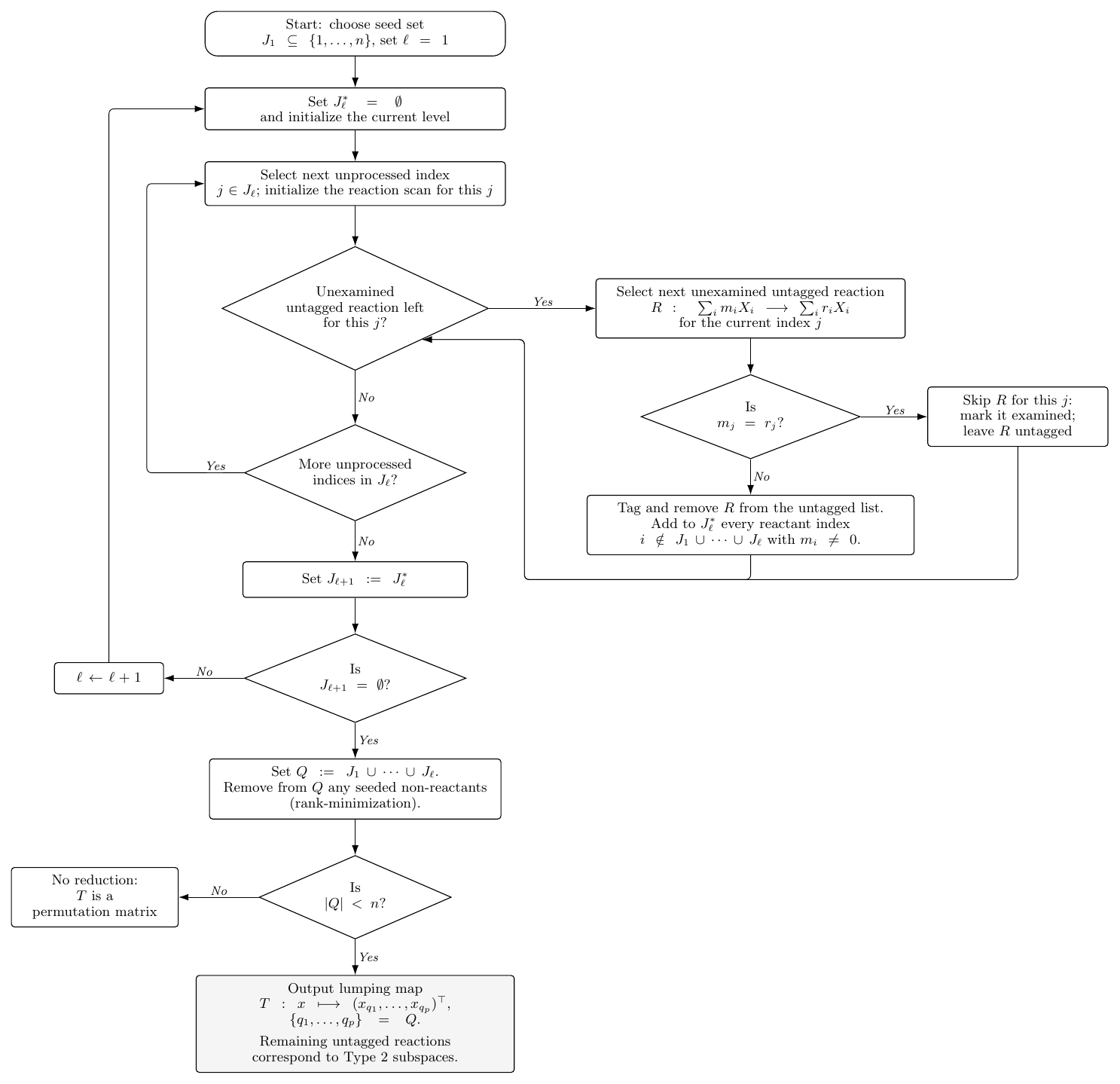}
\caption{\textbf{Construction of a generic linear lumping map from a seed set}
(Algorithm of Section~\ref{sec:genconstr}). Given the reaction network and an
initial set $J_1\subseteq\{1,\ldots,n\}$, the algorithm builds successive index
sets $J_1,J_2,\ldots$ until $J_{\ell+1}=\emptyset$. At each level~$\ell$, the
inner double loop scans, for every $j\in J_\ell$, all reactions still untagged
from previous levels: if $m_j=r_j$ the reaction is left untagged and the scan
moves on (the same reaction may still be processed for a different~$j$);
otherwise the reaction is tagged and its previously-unseen reactant indices are
added to $J_\ell^*$. On termination, $Q=J_1\cup\cdots\cup J_\ell$ (minus seeded
non-reactants) indexes the coordinates retained by the projection~$T$; the
tagged reactions correspond to Type~1 subspaces, and the reactions still
untagged at termination correspond to Type~2 subspaces.}
\label{fig:algorithm}
\end{figure}
\subsubsection{Construction of lumping maps}\label{sec:genconstr}
The observations in the previous subsections permit to construct all lumping maps of minimal rank that involve a reaction with a Type 1 subspace. Lumpings that involve only Type 2 subspaces just pose a linear algebra problem. We describe a (certainly non-optimized) procedure, which is summarized in 
Figure~\ref{fig:algorithm}.\\
The strategy is to designate a starter set of species with indices in $J_1$ and then, according to Proposition \ref{prop:mixlump} and its proof (and passing to the dual space), recursively augment the present set $J_\ell$ by the reactants for each reaction that involves a species with index in $J_\ell$. The rank of the lumping map thus obtained from $J_1$ may not yet be minimal: this occurs if and only if the starter set contains non-reactants. In this case, remove these non-reactants in a final step.
\begin{enumerate}
    \item Start with a nonempty subset $J_1\subseteq\{1,\ldots,n   \}$ (indices of the ``seeded'' species).
    \item For $\ell\geq 1$, given $J_1,\ldots,J_\ell\subseteq\{1,\ldots,n   \}$, set $J_\ell^*=\emptyset$.
    \begin{itemize}
        \item For every $j\in J_\ell$: Consider all yet untagged reactions
        \[
        \sum m_iX_i\rightharpoonup \sum r_iX_i.
        \]
        If $m_j= r_j$ then proceed. (In this case, $X_j$ does not appear in the reaction or acts solely as a catalyst). Otherwise, if $m_j\not=r_j$, then augment $J_\ell^*$ with all $i\not\in J_1\cup\cdots\cup J_\ell$ such that $m_i\not=0$ (i.e., the indices of all reactant species for this reaction). Moreover, mark the reaction with a tag and remove it from further consideration.
        \item Upon completing the run through all $j\in J_\ell$, set $J_{\ell+1}:=J_\ell^*$.
    \end{itemize}
    \item If $J_{\ell+1}\not=\emptyset$, then repeat above the loop with $J_1,\ldots,J_{\ell+1}$ and the remaining untagged reactions.
    \item If $J_{\ell+1}=\emptyset$, then 
    \[
    x\mapsto \begin{pmatrix}
        x_{q_1}\\ \vdots \\x_{q_p}
    \end{pmatrix}, \quad \left\{q_1,\ldots,q_p\right\}=J_1\cup\cdots\cup J_\ell
    \]
    defines a lumping map whenever $p<n$. (The condition $p<n$ is nontrivial; frequently one will end up with just a permutation matrix $T$.)
    \item The remaining untagged reactions correspond to Type 2 subspaces.
    \item As described, the procedure also works for a particular class of constrained lumpings: If certain concentrations $x_{i_m}$ are prescribed in the constraints, then apply the above algorithm with $J_1$ comprising the corresponding indices. (In the case when certain constraints involve linear combinations of more than one species, the above approach may be modified, but the algorithm in Ovchinnikov et al.~\cite{OVPT2021} seems more appropriate.)
\end{enumerate}

\begin{remark}
    Alternatively, if the differential equation system for the network is given in the explicit form 
    \[
    \dot x_i=F_i(x,k),\quad 1\leq i\leq n,
    \]
    then one may replace part 2 of the procedure by the following: \\
For every $j\in J_\ell$, augment $J_\ell^*$ by all indices $s$ with $x_s$ appearing in $F_j$.
\end{remark}

 We illustrate this approach via Proposition \ref{prop:mixlump}:
\begin{example}\emph{Consider the following simple network:
    \[
    X_1+X_2\rightharpoonup X_3,\quad X_4+X_5\rightharpoonup X_6.
    \]
    \begin{itemize}
        \item Choosing $J_1=\{1\}$, in step 2 one obtains $J_1^* = \{2\}$ for the first reaction, thus $J_2=\{2\}$, and the second reaction requires no action. Proceeding with $J_2$ and the (solely remaining) second reaction, one sees that no further action is required. Thus one gets a lumping map with $Tx=\begin{pmatrix}
            x_1\\x_2
        \end{pmatrix}$.
        The (untagged) second reaction corresponds to a Type 2 subspace for the first.
        \item On the other hand, starting with  $J_1=\{3\}$, step 2 will yield $J_1^*=\{1,\,2\}$, and we end up with a lumping map $Tx=\begin{pmatrix}
            x_1\\x_2\\x_3
        \end{pmatrix}$, and the non-reactant species $X_3$ can be removed at the end.
    \end{itemize}}
\end{example}

\begin{example}{\em
Consider the example in Ovchinnikov et al.~\cite{OVPT2021}, subsection 4.1 and Figure 1. The task is to find (in the notation of \cite{OVPT2021}) a constrained lumping that preserves $AI2i$. This can be achieved by direct inspection: Looking at all three reactions involving $AI2i$ ((bottom of part A of the figure), one sees that the reactants $AI2e$ and $DPD$ must be included. Then, looking at the remaining reaction involving $DPD$, one finds that the reactant $SHR$ must be included, and so on. In this way one proceeds (``going up the middle branch'' of part A in this figure, and including all species in complexes from which an arrow points to one that is already included) to find that $SAH$ must be included. Now consider the remaining reactions involving $SAM$ in this clipping of the reaction scheme:
\[
\begin{array}{ccc}
   &  &  Met\\
     & & \downarrow\\
   SAM_{Decarb}&\leftarrow &SAM\\
   & & \downarrow\\
   & & SAH
\end{array}
\]
One sees that the reactant $SAM$ must be included, as well as the reactant $Met$ in the next step, but the non-reactant $SAM_{Decarb}$ need not. Finally, for the topmost reaction in part A the reactant $Nut$ must be included\footnote{The intuitive strategy to ``walk along the graph of the reaction network'', which we used here, could obviously be cast in a more formal manner.}. We thus arrive at the same reduction as given in \cite{OVPT2021}.}
\end{example}

\subsection{A look at some other types of kinetics}
The discussion above was restricted to mass action kinetics, because mass action systems form an important class, and it was possible to obtain a complete overview of all lumping maps. To explore whether other common kinetics might offer a greater variety of linear lumping maps, we discuss some more general settings here.
\subsubsection{A single reaction with product form velocity}
We keep the stoichiometry conditions but different types of reaction velocities are permitted here. Thus, we consider a reaction equation of the form 
\begin{equation}\label{generalreac}
\dot x=F(x,b)=\varphi(x,b) v
\end{equation}
with $v$ having integer entries, and $\varphi$ analytic in $x$ and in parameters $b$. 
A large and relevant class of reactions assumes a product form
\begin{equation}\label{eq:prodform}
    \varphi(x,b)=\varphi_1(x_1,\beta_1)\cdots\varphi_n(x_n,\beta_n)
\end{equation}
for the reaction velocities. In addition to mass action, this class includes Michaelis--Menten or Hill terms.\\
Proposition \ref{HWprop} still holds for system \eqref{generalreac} in general, and for system \eqref{eq:prodform} in particular. Therefore, we search for subspaces $W$ that are invariant with respect to all Jacobians. We again find two types, with $DF(x,b)=v\, D\varphi(x,b)$, and using  arguments analogous to the proof of Proposition \ref{prop:onereac}:
\begin{itemize}
\item Type 1: One has $D\varphi(x,b)\,W=\{0\}$ for all $x$ and $b$.
\item Type 2: One has $v\in W$.
\end{itemize}
The second type yields stoichiometric first integrals, as before. Considering Type 1 for product form, one finds
\[
D\varphi(x,b)=\varphi(x,b)\cdot \begin{pmatrix}\dfrac{\varphi_1'(x_1,\beta_1)}{\varphi_1(x_1,\beta_1)}& \cdots &\dfrac{\varphi_n'(x_n,\beta_n)}{\varphi_n(x_n,\beta_n)}\end{pmatrix},
\]
with the prime denoting the derivative with respect to $x_i$, respectively.
Now the same argument as in the mass action case shows that
\[
D\varphi(x,b)\, w=0 \text{  for all  } x \Longleftrightarrow \varphi_1'(x_1)\cdot w_1=\cdots =\varphi_n'(x_n)\cdot w_n=0 \text{  for all  } x ,
\]
which again shows that $W$ corresponds to non-reactant species. We conclude that Type 1 also yields the same conditions as mass action kinetics.
\subsubsection{Generic networks of product form velocity reactions}
We restrict attention to reaction equations of the type
\[
\dot x=F(x,b,k)=\sum_{i=1}^d k_i \varphi_i(x,b)\,v_i
\]
with additional parameters $k_i$. One readily sees that Proposition \ref{prop:all} remains valid in this more general setting. Thus, a linear lumping map for the network will be a lumping map for every single reaction, and a case-by-case discussion is again possible. With all reaction equations having product form, Proposition \ref{prop:mixlump} and Corollary \ref{cor:mixlump} apply almost verbatim. \\

To construct all lumping maps for a given generic network, one may imitate the pattern for mass action systems, employing the alternative approach for the loop.

\subsection{An interim conclusion and a comparison with computational methods}
As we have shown, the range of possible linear lumping maps for generic mass action (and 
product-form) reaction networks is rather limited. The reason is that the range for single 
reactions is rather limited, and the genericity condition forces lumpability for every single reaction.

This result provides theoretical context for computational lumping methods such as 
CLUE~\cite{OVPT2021} and ERODE~\cite{CardelliTTV2017}. These algorithms seek \emph{structural} 
(parameter-independent) lumpings---included in the generic case analyzed here. Our characterization explains why such methods often find only modest reductions or none at all: for generic 
parameters, only Type 1 (non-reactant species elimination) and Type 2 (stoichiometric first 
integrals) lumpings exist.

This observation focuses interest on non-generic parameter regimes. In applications, rate 
constants are not arbitrary but arise from physical and chemical considerations. Special 
relationships among parameters---such as detailed balance, microscopic reversibility, or 
enzyme saturation conditions---may place systems at or near critical parameter values where 
non-trivial lumpings become available. In section 4 we will develop systematic methods to identify 
such critical parameters.


\section{Critical parameters}\label{sec:critpar}

In the present section we will focus on non-generic reaction networks, and exclusively deal with mass action systems.\\ We return to a notion that was mentioned earlier, now in a formal manner.
\begin{definition}
    Let system \eqref{parode} be given, and let $e<n$ and $T:\, \mathbb R^n\to\mathbb R^e$ be linear, of full rank. Then we call $k^*$ a critical parameter value for $T$ if $T$ is solution-preserving from $\dot x=F(x,\,k^*)$ to some polynomial system $\dot y=G(y,k^*)$ on $\mathbb R^e$.
\end{definition}
The focus on critical parameters is not new: The classical work by Wei and Kuo \cite{WK1969a, WK1969b} on first order networks actually deals with critical parameters: If a mass action network comprises only monomolecular reactions  $X_j\rightarrow X_i$ between species $X_1,\ldots,X_n$ with rate constants $k_{ij}$, then the dynamics is determined by a differential equation
\begin{equation}
\dot x = A(k)\,x;
\end{equation}
in other words, by the Laplacian.
As noted by Wei and Kuo \cite{WK1969a,WK1969b} (building on Wei and Prater \cite{WP1962}), for linear systems one may generally construct lumping maps via sums of eigenspaces, or generalized eigenspaces. One may extend this to joint eigenspaces of Jacobians in nonlinear settings.
 But there are limitations to this method; for instance eigenspaces cannot be determined exactly in general. In the following we will therefore consider different approaches.\\

In some applications, there is interest in prescribed candidates for linear lumping maps; for instance, these may be motivated by chemical intuition. Since prescribing the lumping map will impose conditions on the rate parameters, we also have a critical parameter problem here.\\

Critical parameters are relevant from a different perspective: While one should not necessarily expect exact lumping for a given system, the system parameters may be close to critical parameters.
Then, loosely speaking, solutions of \eqref{parode} with $k=\widehat k$ remain close to solutions of \eqref{parodes} as long as $\widehat k$ is close to a critical parameter $k^*$. This leads to approximate lumpings, which are well established in the literature; see e.g.\ Wei and Kuo \cite{WK1969b}, Li and Rabitz \cite{LR1990}, and Leguizamon-Robayo et al.\ \cite{LJTTV}.

\subsection{Conditions for critical parameter values}
Proposition \ref{HWprop} opens a path for finding critical parameter values via necessary and sufficient conditions. The following restatement was already established by Li and Rabitz \cite{LR1989}; see their equation (17).
\begin{lemma}\label{lalem}
With $T\in\mathbb R^{e\times n}$ given, let $b_1,\ldots,b_{n-e}\in\mathbb R^n$ be a basis of $\ker T$, and let $B$ be the matrix with columns $b_i$. Then $k^*$ is a critical parameter for $T$ and \eqref{parode} if and only if 
\begin{equation}\label{linparcond}
T\,DF(x,k^*)\,B=0
\end{equation}
for all $x$.
\end{lemma}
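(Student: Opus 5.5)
The plan is to reduce Lemma \ref{lalem} directly to Proposition \ref{HWprop}, which characterizes solution-preserving linear maps via invariance of $\ker T$ under all Jacobians. By definition, $k^*$ is critical for $T$ exactly when $T$ is a linear lumping map for the specialization at $k^*$. Proposition \ref{HWprop} applies because $T$ has full rank $e$ and is therefore surjective. It says that $k^*$ is critical if and only if $DF(x,k^*)(\ker T)\subseteq \ker T$ for all $x\in\mathbb R^n$. It therefore suffices to show that this inclusion is equivalent to $T\,DF(x,k^*)\,B=0$ for every fixed $x$.

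For the equivalence at a fixed $x$, I would argue as follows. The inclusion $DF(x,k^*)(\ker T)\subseteq\ker T$ holds if and only if $T\,DF(x,k^*)\,w=0$ for every $w\in\ker T$. By linearity in $w$, this is equivalent to $T\,DF(x,k^*)\,b_i=0$ for each basis vector $b_i$, $1\le i\le n-e$, since every $w\in\ker T$ is a linear combination $w=\sum c_i b_i$. The $i$-th column of the $e\times(n-e)$ matrix $T\,DF(x,k^*)\,B$ is precisely $T\,DF(x,k^*)\,b_i$. Hence the vanishing of all these columns is exactly condition \eqref{linparcond}. Quantifying over all $x$ then gives the lemma.

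There is no real obstacle here; the only points to check are bookkeeping. First, surjectivity of $T$ (from ${\rm rank}\,T=e$) is needed to invoke Proposition \ref{HWprop}. Second, the choice of basis $B$ is irrelevant, since a different basis gives $BC$ with $C$ invertible, and the condition $T\,DF\,BC=0$ is equivalent to $T\,DF\,B=0$. It may also be worth noting why the formulation is useful: the entries of $T\,DF(x,k^*)\,B$ are polynomials in $x$ whose coefficients are polynomial in $k^*$ and the entries of $T$ and $B$. Requiring that all these coefficients vanish yields the finite polynomial system for the critical parameters announced in the overview.
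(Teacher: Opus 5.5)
Your proposal is correct and follows the paper's route: the paper gives no separate proof and presents the lemma as an immediate restatement of Proposition \ref{HWprop} (attributed to Li and Rabitz). As in your argument, the invariance $DF(x,k^*)(\ker T)\subseteq\ker T$ only needs to be checked on the basis vectors $b_i$, which are exactly the columns of $B$.
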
 
Now let $T$ and (an appropriate choice of) $B$ be fixed. Then,
given a representation of $F$ as a linear combination of vector-valued monomials with the rate parameters $k$ as coefficients, the entries of $k^*$ satisfy a homogeneous linear system of equations. 
Specifically, we can write
\begin{equation}\label{eq:multiindex}
DF(x,k) = \sum_{\alpha} M_\alpha(k) \cdot x^\alpha
\end{equation}
where $M_\alpha(k)$ are matrices depending linearly on $k$ and $x^\alpha$ are monomials,
and $T\,DF(x,k^*)\,B=0$ if and only if $T \cdot M_\alpha(k^*) \cdot B = 0$ for all $\alpha$. This system of equations is linear in the entries of $T$, in the entries of $B$, and in $k$.
There remains the question how to obtain $T$. There are various perspectives to this, and we will discuss two of them.

\subsection{Geometric interpretation}
The set of critical parameters for a given lumping map $T$ forms a \emph{semi-algebraic set} in 
parameter space. $\mathbb{R}^d$, 
For fixed $T$ (and $B$), the equations $T M_\alpha(k)B=0$ are homogeneous linear equations in the rate parameters $k$.  If $T$ is also unknown and is written in a row-echelon chart, the resulting equations are at most quadratic in the chart variables and linear in $k$.
In addition, the rate parameters must satisfy positivity conditions.

In the linear setting, the critical variety $V_T \subset \mathbb{R}^d$ consists of one irreducible component. Its \emph{codimension} measures how ``exceptional'' 
the corresponding parameter regime is:
\begin{itemize}
    \item {\bf Codimension 0 (open set):} The lumping holds generically---this leads only 
    to the ``obvious'' reductions characterized in Section 3.
    \item {\bf Codimension 1 (intersection of open set and hyperplane):} A single linear relation among parameters 
    enables the lumping.
    \item {\bf Higher codimension:} Multiple independent parameter constraints are required.
\end{itemize}

For applications, low-codimension components are most relevant, as they are most likely to be approximately satisfied by experimentally 
determined parameters.

This geometric perspective connects to \emph{computational algebraic geometry}. Tools such 
as Gr\"obner bases~\cite{CLOS} can decompose the critical variety into irreducible 
components, determine their dimensions, and test membership. For systems with many 
parameters, numerical algebraic geometry methods may be more practical.

\subsection{An algorithmic approach}
 We will show that obtaining all critical parameters and corresponding lumping maps amounts to solving finitely many systems of polynomial equations, up to modifications permitted by Remark \ref{nonuniqrem}. In fact, these modifications open a path toward their determination. We recall a fact from elementary linear algebra.
\begin{remark}\label{rrerem}{\em Row-echelon form (a reminder):
\begin{itemize}
    \item Let $1\leq e<n$ and $T\in\mathbb R^{e\times n}$ of full rank. Then there is an invertible $Q\in\mathbb R^{e\times e}$ such that $QT$ is in reduced row echelon form. Thus, up to a permutation of columns,
    \begin{equation}\label{rowech}
         QT=\begin{pmatrix}
        E_e & \widehat T
    \end{pmatrix},
    \end{equation}
    where $E_e $ denotes the $e\times e$  identity matrix, and $\widehat T\in\mathbb R^{e\times(n-e)}$. Then the columns of the matrix 
    \[
    B=\begin{pmatrix}
        \widehat T\\ -E_{n-e}
    \end{pmatrix}
    \]
    form a basis of $\ker (QT)$. (Note that $B$ is not unique; it can be modified with elementary column operations. But it seems that such operations provide no further simplification.)
    \item By the above, it suffices to consider matrices of the form \eqref{rowech} whenever the first $e$ columns of $T$ are linearly independent. In order to cover all possibilities, it suffices to check all subsets of $\{1,\ldots,n\}$ with $e$ elements; thus a total of $\begin{pmatrix}
        n\\e
    \end{pmatrix}$ cases, and renumber variables accordingly.
    \item We do not aim to further refine, or optimize, the procedure sketched above. But we note that in the special case that $T$ contains $r$ zero columns ($1\leq r<n-e$), one may use a special representation with 
    \[
    \widehat T=\begin{pmatrix}
        \widehat T^*& 0
    \end{pmatrix}, \quad \widehat T^*\in \mathbb R^{e\times (n-e-r)},
    \]
    and fewer nonzero entries in $\widehat T$. This may help when feasibility problems emerge in computations.
\end{itemize}
}
\end{remark}
With Lemma \ref{lalem} and the subsequent observations we find:
\begin{proposition}\label{algocrit} Let system \eqref{parode} be given.
\begin{enumerate}[(a)]
    \item  Let $1\leq e<n$ and assume that $T$ is in reduced row echelon form \eqref{rowech}. Then the entries of $\widehat T$ and the corresponding critical parameter values are determined by a system of polynomial equations. This system may be written as a system of linear equations for $k^*$, the matrix coefficients being of degree $\leq 2$ in the entries of $\widehat T$.
    \item All linear lumping maps and their corresponding critical parameter values can be obtained from finitely many systems of the type given above.
\end{enumerate}
\end{proposition}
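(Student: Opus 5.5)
We combine Lemma \ref{lalem} with the normal form of Remark \ref{rrerem}. For part (a) we assume $T=\begin{pmatrix} E_e & \widehat T\end{pmatrix}$ and take as kernel basis the columns of $B=\begin{pmatrix}\widehat T\\ -E_{n-e}\end{pmatrix}$. Because $T$ has full rank $e$, these $n-e$ columns are linearly independent and lie in $\ker T$, so they form a basis. Lemma \ref{lalem} then says that $k^*$ is critical for $T$ if and only if $T\,DF(x,k^*)\,B=0$ for all $x$.

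Next we expand the Jacobian as in \eqref{eq:multiindex}, $DF(x,k)=\sum_\alpha M_\alpha(k)x^\alpha$. For a mass action system \eqref{manyreac} the entries of $DF$ are linear in $k$, so each $M_\alpha(k)$ is linear in $k$. Only finitely many monomials $x^\alpha$ occur, because $F$ is polynomial. The monomials are linearly independent over $\mathbb R$, so the polynomial identity in $x$ holds if and only if $T M_\alpha(k^*)B=0$ for every $\alpha$. This gives finitely many scalar equations.

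Each entry of $T M_\alpha(k)B$ is bilinear in $T$ and $B$ and linear in $k$. The entries of $T$ and of $B$ are constants or entries of $\widehat T$. So the coefficient of each $k_i$ is a polynomial of degree at most $2$ in the entries of $\widehat T$. Every monomial occurring is (entry of $T$)$\cdot$(entry of $B$), and each of these factors has degree at most $1$ in $\widehat T$. Hence the system can be written as $\mathcal M(\widehat T)\,k^*=0$, where $\mathcal M(\widehat T)$ has polynomial entries of degree $\le 2$. This is a system of polynomial equations jointly in $(\widehat T,k^*)$ and linear in $k^*$, which proves (a).

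For part (b), let $T$ be an arbitrary lumping map of rank $e<n$ for some critical $k^*$. By Remark \ref{nonuniqrem}, $QT$ is an equivalent lumping map for every invertible $Q$. It has the same kernel, hence the same reduced dynamics up to a change of basis and the same set of critical parameters. By Remark \ref{rrerem}, after choosing $e$ linearly independent columns (pivot set $S$) and renumbering the variables, some $QT$ has the form \eqref{rowech}. Renumbering coordinates turns \eqref{parode} into a conjugate polynomial system, so part (a) applies to each choice of $S$.

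There are only finitely many pairs $(e,S)$: for each $1\le e<n$ there are $\binom{n}{e}$ subsets $S$. Each pair yields one polynomial system as in (a). Every solution $(\widehat T,k^*)$ of such a system gives a lumping map with critical parameter $k^*$, by the converse direction of Lemma \ref{lalem}. Conversely, every lumping map arises in this way, up to the modifications of Remark \ref{nonuniqrem}.

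There is no real obstacle here. The one point needing care is the reduction from an arbitrary $T$ to the normal form. One has to check two things: that the criterion of Lemma \ref{lalem} depends only on $\ker T$, which Proposition \ref{HWprop} guarantees, and that permuting variables commutes with the construction.
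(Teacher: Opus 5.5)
Your proposal is correct and follows the paper's argument. For part (a) you combine Lemma \ref{lalem} with the monomial expansion \eqref{eq:multiindex}, using $T=\begin{pmatrix} E_e & \widehat T\end{pmatrix}$ and $B=\begin{pmatrix}\widehat T\\ -E_{n-e}\end{pmatrix}$. For part (b) you use the finitely many ranks $e$ and the finitely many choices of $e$ independent columns. This is the same as the paper's two-line proof, only spelled out in more detail.

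One small imprecision: the columns of $B$ are linearly independent because of the $-E_{n-e}$ block. Full rank of $T$ is what gives $\dim\ker T=n-e$, which is why these columns span the kernel.
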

\begin{proof}
    Part (a) is a direct consequence of the Lemma and the subsequent observations. As to part (b), for given rank $e$ the setting of (a) holds, up to choosing a set of linearly independent columns. And for given $n$ there are only finitely many possible ranks of lumping maps.
    \end{proof}
    Of course, feasibility may pose serious obstacles. But still we have reduced the problem to a problem of solving finitely many polynomial systems.
\begin{remark}\label{remconstr}
{\em   A straightforward modification of the procedure works when some rows of $T$ are prescribed; thus one considers critical parameters in the setting of constrained lumping.
Starting with
\[
T=\begin{pmatrix}
    T_1\\ T_2
\end{pmatrix}, \quad T_1\in\mathbb R^{e_1\times n}, \,T_2\in \mathbb R^{(e-e_1)\times n},
\]
with the first $e_1$ rows prescribed, write
\[
T=\begin{pmatrix}
    T_{11}&T_{12}&T_{13}\\ 
     T_{21}&T_{22}&T_{23}
\end{pmatrix},\quad T_{11}\in\mathbb R^{e_1\times e_1},\quad T_{12}\in \mathbb R^{e_1\times(e-e_1)}
\]
in block form, with the remaining blocks of appropriate sizes; in particular $T_{22}\in\mathbb R^{(e-e_1)\times(e-e_1)}$. Up to column permutations, we may assume that $T_{11}$ is invertible, and therefore, for some $Q_1$,
\[
\begin{pmatrix}
    Q_1& 0\\ 0& E
\end{pmatrix}\cdot T= \begin{pmatrix}
    E&\widehat T_{12}&\widehat T_{13} \\
     T_{21}&T_{22}&T_{23}
\end{pmatrix}=:T^*.
\]
Next,
\[
\begin{pmatrix}
    E&0\\ -\widehat T_{21} &E
\end{pmatrix}\cdot T^*=\begin{pmatrix}
    E&\widehat T_{12}&\widehat T_{13} \\
     0&\widetilde T_{22}&\widetilde T_{23}
\end{pmatrix}=T^{**},
\]
and we may assume (up to column permutations) that $\widetilde T_{22}$ is invertible. Finally there exists $Q_2$ such that
\[
\begin{pmatrix}
    E&0\\ 0& Q_2
\end{pmatrix} T^{**}=\begin{pmatrix}
      E&\widehat T_{12}&\widehat T_{13} \\
     0& E &\widehat T_{23}
\end{pmatrix}=:\widehat T.
\]
Note that the first block row of $\widehat T$ contains only constant matrices, representing (modified) constraints. The entries of $\widehat T_{23}$ may be chosen freely. \\
Now proceed as above with
\[
\widehat B=\begin{pmatrix}
     \widehat T_{12}\widehat T_{23}-\widehat T_{13}\\  -\widehat T_{23}\\ E
\end{pmatrix}.
\]
}
\end{remark}
\subsection{Examples}
    We discuss some examples to illustrate the procedure.
\begin{example}{\em 
For a first order reaction network with three species, and no reaction between $X_1$ and $X_3$ we obtain the matrix
\[
A(k)=\begin{pmatrix}-k_1&k_{-1}&0\\ k_1&-(k_{-1}+k_2)& k_{-2}\\ 0 & k_2 & -k_{-2}\end{pmatrix}.
\]
To determine lumping maps of rank two we make the ansatz
\[
T=\begin{pmatrix} 1 & 0 & t_1\\ 0&1&t_2\end{pmatrix}
\]
with parameters $t_1,\,t_2$. This yields
\[
B=\begin{pmatrix} t_1\\ t_2\\ -1\end{pmatrix},
\]
and with Lemma \ref{lalem} we have
\[
TA(k)B=\begin{pmatrix} -t_1k_1+t_2k_{-1}+t_1t_2k_2+t_1k_{-2}\\ t_1k_1-t_2k_{-1}+(t_2-1)t_2k_2+(t_2-1)k_{-2}  \end{pmatrix}.
\]
The condition $TA(k)B=0$ may be rewritten in the form
\[
\begin{pmatrix} -t_1&t_2&t_1t_2&t_1\\ t_1&-t_2&(t_2-1)t_2&t_2-1  \end{pmatrix}\begin{pmatrix}
    k_1\\ k_{-1}\\ k_2\\ k_{-2}
\end{pmatrix}=0.
\]
This linear system in the $k_i$ admits a nontrivial solution only if the matrix has rank one (rank zero being impossible). In turn, this is equivalent to $t_1+t_2=1$.
Any choice of $t_1$ (thus $t_2=1-t_1$) will yield a solution to the linear system for the $k_i$. In addition the $k_i$ should be nonnegative, which places restrictions on $t_1$.\\
As for one specific example, consider $t_1=1,\,t_2=0$. Then the condition on the parameters reads $k_{-2}=k_1$, and one verifies that $Tx=\begin{pmatrix}
    x_1+x_3\\ x_2
\end{pmatrix}$ yields a solution preserving map to a system in dimension two.
}
\end{example}
\begin{example}\label{ex:mmsmall}{\em 
    The reversible Michaelis--Menten system represents a well-known model for an enzyme-catalyzed reaction, with species $S$ (substrate), $E$ (enzyme), $C$ (complex) and $P$ (product). The reactions are
\begin{equation*}
   \ce{$S$ + $E$ <=>[$k_1$][$k_{-1}$] $C$ <=>[$k_2$][$k_{-2}$] $E$ + $P$ }.
\end{equation*}
With mass-action kinetics one obtains the differential equation system
\begin{align*} 
\dot s &= -k_1es + k_{-1}c, \\
\dot e &= -k_1es + (k_{-1}+k_2)c- k_{-2}ep,\\
\dot c &=  k_1es - (k_{-1}+k_2)c+ k_{-2}ep,\\
\dot p &= k_2c - k_{-2}ep.
\end{align*}
We search for a reduction to dimension one, thus a lumping matrix $T$ of rank one, with the ansatz
\[
T=\begin{pmatrix} 1&t_1&t_2&t_3\end{pmatrix},
\]
thus
\[
B=\begin{pmatrix}
    t_1&t_2&t_3\\ -1&0&0\\ 0&-1&0\\ 0&0&-1
\end{pmatrix}.
\]
With
\[
DF=\begin{pmatrix}-k_1e&-k_1s&k_{-1} & 0\\
-k_1e&-k_1s-k_{-2}p&k_{-1}+k_2 & -k_{-2} e\\
k_1e&k_1s+k_{-2}p&-k_{-1}-k_2 & k_{-2} e\\
0&-k_{-2}p&k_2 & -k_{-2} e\\
\end{pmatrix}
\]
one finds
\[
T\cdot DF\cdot B=\begin{pmatrix}
   - t_1\cdot k_1e(1+t_1-t_2)+ k_1s(1+t_1-t_2)+k_{-2}p(t_1-t_2+t_3)\\
  -t_2 \cdot k_1e(1+t_1-t_2)-(1+t_1-t_2)k_{-1}-(t_1-t_2+t_3)k_2\\
  - t_3\cdot k_1e(1+t_1-t_2)+(t_1-t_2+t_3)\cdot k_{-2}e
\end{pmatrix}.
\]
After some obvious simplifications, we arrive at
\[
\begin{pmatrix}
    1+t_1-t_2&0&0&0\\
    0&0&0&t_1-t_2+t_3\\
    0&1+t_1-t_2&t_1-t_2+t_3&0
\end{pmatrix}\cdot\begin{pmatrix}
    k_1\\ k_2\\ k_{-1}\\ k_{-2}
\end{pmatrix} =0.
\]
This system always admits nontrivial solutions. But if one requires all parameters $k_i$ to be nonzero (thus all reactions to be involved), then necessarily $1+t_1-t_2=t_1-t_2+t_3=0$; equivalently
\[
t_2=1+t_1\text{  and  } t_3=1.
\]
One sees that $T$ defines a (stoichiometric) first integral of the system. Figure~\ref{fig:michaelis_menten} shows the network structure.
}
\end{example}
\begin{figure}[htbp]
\centering
\includegraphics[width=0.75\textwidth]{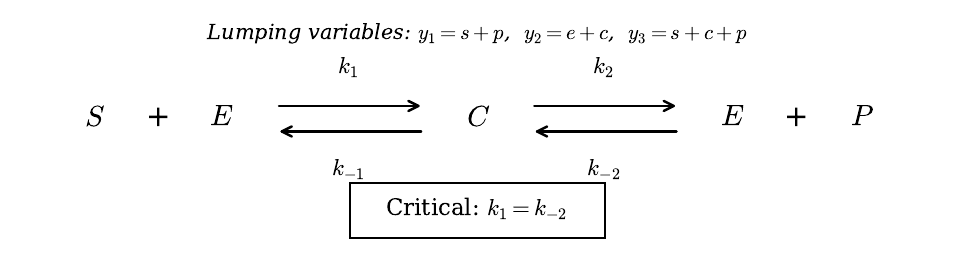}
\caption{The reversible Michaelis--Menten reaction network. Substrate $S$ 
and enzyme $E$ form complex $C$, which can dissociate to release product 
$P$ and regenerate the enzyme. The critical parameter condition 
$k_1 = k_{-2}$ enables a constrained lumping that preserves the 
stoichiometric first integrals and introduces a further conserved quantity 
$y_3 = s + c + p$. See Example~\ref{ex:mm} for the detailed analysis, and also Example~\ref{ex:mmsmall}.}
\label{fig:michaelis_menten}
\end{figure}

\begin{example}\label{ex:mm}
{\em
We look at the Michaelis--Menten system again, to illustrate Remark \ref{remconstr}. The essential new aspect is that the first two rows of $T\in\mathbb R^{3\times 4}$ are prescribed; here we choose them as stoichiometric first integrals. Considering the case when the first three columns of $T$ are linearly independent, we have with Remark~\ref{remconstr}:
\[
T=\begin{pmatrix} 1&0&1&1\\ 0&1&1&0\\0&0&1&t\end{pmatrix}, \quad B=\begin{pmatrix}t-1\\ t\\ -t\\ 1\end{pmatrix}.
\]
The computation of $T\cdot DF\cdot B$ is straightforward. Due to the prescribed first integrals, its first and second entries are zero, and from the third entry one obtains the condition
\[
k_1e(t-1)+(k_1s+k_{-2}p)t+(k_{-1}+k_2)t+k_{-2}e-k_{-2}pt^2-k_2t^2-k_{-2}et=0,
\]
which must hold for all choices of $s,\,e,\,c,\,p$. We obtain the system
\[
\begin{pmatrix}
    0&t(1-t)&t&0\\
    t-1&0&0&1-t\\
    t&0&0&0\\
    0&0&0&t(1-t)
\end{pmatrix}\cdot\begin{pmatrix}
    k_1\\ k_2\\ k_{-1}\\ k_{-2}
\end{pmatrix} =0.
\]
The matrix has rank three whenever $t\not\in\{0,1\}$, thus reductions always exist. For $t=1$ the rank equals two, and the parameter condition is $k_1=k_{-1}=0$; in other words, one reversible reaction pair is discarded. In the distinguished case $t=0$ the rank equals one, and 
 there is only one parameter condition left, viz.\ $k_1=k_{-2}$, and lumping with coordinates
$y_1=s+p$, $y_2=e+c$, and $y_3=s+p+c$ yields the reduced system

\[
\dot y_1=-k_1(y_1+y_2-y_3)y_1+(k_{-1}+k_2)(y_3-y_1),
\qquad
\dot y_2=0,
\qquad
\dot y_3=0.
\]

 }
\end{example}
\section{Proper lumping and related concepts}



\subsection{Criteria for proper lumping}\label{subsec:criteria}
Wei and Kuo \cite{WK1969a} discuss a classical example for a prescribed lumping map and ensuing conditions for critical parameter values. Following them, we call a lumping {\em proper} if there exist a partition
\[
\left\{1,\ldots,n\right\} =I_1\dot\cup\cdots\dot\cup I_r
\]
 and positive constants $\gamma_1,\ldots,\gamma_n$ such that the lumping is given by
\begin{equation}\label{properlumpmatgam}
\widetilde Tx=\begin{pmatrix} \sum_{j\in I_1} \gamma_j x_j \\ \vdots \\ \sum_{j\in I_r}\gamma_j x_j\end{pmatrix};
\end{equation}
thus every species concentration appears in exactly one entry of $\widetilde T$. We will refer to the $I_p$ as {\em blocks}. \\
From a mathematical perspective the conditions on $\widetilde T$ can be simplified: By a linear coordinate transformation with matrix ${\rm diag}\, (\gamma_1,\ldots,\gamma_n)$ (a scaling) one obtains a system with lumping map 
\begin{equation}\label{properlumpmat}
 Tx=\begin{pmatrix} \sum_{j\in I_1} x_j \\ \vdots \\ \sum_{j\in I_r} x_j\end{pmatrix}.
\end{equation}
Thus row $\# \ell$ of $T$ contains only entries $1$ and $0$, and the entry equals $1$ if and only if the column index lies in $I_\ell$. Compare also the notion of forward differential equivalence (FDE) in Cardelli et al. \cite{CardelliTTV2017}. We will use this simplification in the following, to keep notation at bay, but note Remark \ref{scalerem} below.\\ 
The following result was stated by Wei and Kuo for first order reactions. The general  version of the statement, and our proof using Lemma \ref{lalem}, seem to be new. For a different characterization see Cardelli et al. \cite{CardelliTTV2017}.
\begin{proposition}\label{colslem} Let system \eqref{parode} be given, and abbreviate $M=DF(x,k^*)$. With $T$ as in \eqref{properlumpmat}, and $1\leq p,q,\leq r$, denote by  $M_{pq}$  the $|I_p|\times |I_q|$ submatrix obtained from $M$ by deleting all rows with numbers not in $I_p$ and all columns with numbers not in $I_q$. \\
Then  $k^*$ is a critical parameter for $T$ and \eqref{parode} if, and only if, for each pair $(p,\, q)$ all column sums of $M_{pq}$ are equal.
\end{proposition}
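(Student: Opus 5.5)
The plan is to apply Lemma \ref{lalem} directly, with a convenient basis of $\ker T$. Because the statement is pointwise in $x$, it suffices to show, for a fixed matrix $M=DF(x,k^*)$, that $TMB=0$ holds if and only if every block $M_{pq}$ has all column sums equal. Letting $x$ vary over $\mathbb R^n$ then gives the claim. Equivalently, by Proposition \ref{HWprop}, we must show $M(\ker T)\subseteq \ker T$.

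\textbf{Basis of $\ker T$.} Since $T$ has the form \eqref{properlumpmat}, a vector $z$ lies in $\ker T$ exactly when $\sum_{j\in I_q} z_j=0$ for each block $q$. Hence $\ker T$ is spanned by the differences $e_i-e_j$ with $i,j$ in a common block $I_q$. For each block, fix one index $j_q\in I_q$ and take all $e_i-e_{j_q}$ with $i\in I_q\setminus\{j_q\}$. This gives $\sum_q(|I_q|-1)=n-r$ linearly independent vectors, and they form the columns of $B$. Blocks with $|I_q|=1$ contribute no columns, and the column sum condition is vacuous for them, which is consistent with the claim.

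\textbf{Computing $TMB$.} The $p$-th row of $T$ is the indicator vector of $I_p$. For any $j$, the entry $(TMe_j)_p=\sum_{i\in I_p} M_{ij}$ is therefore the column sum of column $j$ of $M$ restricted to rows in $I_p$. In other words, it is the column sum of $M_{pq}$ in the column corresponding to $j$, where $q$ is the block containing $j$. Consequently the $p$-th entry of $TM(e_i-e_{j_q})$ equals (column sum of $M_{pq}$ at $i$) minus (column sum of $M_{pq}$ at $j_q$). So $TMB=0$ says exactly that, for every $p$ and $q$, all column sums of $M_{pq}$ coincide with the one at the reference column $j_q$. That is the same as requiring all column sums of $M_{pq}$ to be equal.

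There is no genuine obstacle here. The only care needed is the bookkeeping between the global column index $j$ and its position inside the submatrix $M_{pq}$, together with the observation that the conditions run over all pairs $(p,q)$ and hold for every $x$. I would also note briefly why restricting to the normalized form \eqref{properlumpmat} loses no generality. With general positive weights $\gamma_j$, one conjugates by the scaling $\mathrm{diag}(\gamma_1,\ldots,\gamma_n)$, and the condition becomes a weighted column sum condition. This matches the paper's deferral to Remark \ref{scalerem}.
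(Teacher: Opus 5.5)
Your proof is correct and is essentially the paper's argument: apply Lemma~\ref{lalem}, note that the rows of $TM$ are the blockwise column sums of $M$, and test these against a basis of $\ker T$ made of within-block differences. The only cosmetic difference is the basis: the paper uses consecutive differences $e_i-e_{i+1}$ (the bidiagonal blocks $B_j$), while you use differences $e_i-e_{j_q}$ with a fixed reference index per block, and both lead to the same conclusion.
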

\begin{proof} 
Let $B$ be such that its columns form a basis of $\ker T$. By Lemma \ref{lalem} it suffices to show that $M$ satisfies $T\cdot M\cdot B=0$ if and only if  all column sums of $M_{ij}$ are equal.\\
One may assume that 
\[
T=\begin{pmatrix} 1&\cdots&1&0&\cdots& 0& 0&\cdots\\
0&\cdots &0&1&\cdots&1&0&\cdots\\
\vdots&&&&&&&\ddots
\end{pmatrix},
\]
and that all rows with a single entry $1$ are gathered in the last columns. Letting
\[
M=\begin{pmatrix} M_{11}&\cdots&M_{1r}\\
          \vdots & \ddots&\vdots \\
               M_{r1}&\cdots & M_{rr}\end{pmatrix}
\]
according to the partitioning, one finds

\[
T\cdot M=\begin{pmatrix}{\rm cols}\,( M_{11})&\cdots&{\rm cols}\,(M_{1r})\\
          \vdots & \ddots&\vdots \\
               {\rm cols}\,(M_{r1})&\cdots & {\rm cols}\,(M_{rr})\end{pmatrix},
\]
where ${\rm cols}\,(M_{pq})$ denotes the row which has as entry $\#\ell$ the sum of the elements of column $\#\ell$ of $M_{pq}$.
Now (in a variant of Remark \ref{rowech}) the matrix $B$ built from basis elements of $\ker T$ can be chosen as
\[
B=\begin{pmatrix} B_1&0&\cdots& 0\\
                             0& \ddots &  & \vdots\\
                             \vdots& & \ddots &0\\
                              0& \cdots &  0& B_s
\end{pmatrix},
\]
with each
\[
B_j=\begin{pmatrix} 1&0&0&\cdots& 0\\
                               -1&1&0&\cdots &0\\
0&-1& 1 && 0  \\
\vdots& & \ddots &\ddots & \vdots\\
0&\cdots &\cdots & -1& 1 \\
0&\cdots &&0 &-1
\end{pmatrix}
\]
of appropriate size, corresponding to a row of $T$ with more than one entry $1$. Now multiplication of $TM$ by $B$ shows, for each index pair $(p,\,q)$, that all entries of ${\rm cols}\,(M_{pq})$ are equal.
\end{proof}

Figure~\ref{fig:proper_lumping} summarizes the construction procedure.
\begin{figure}[htbp]
\centering
\includegraphics[width=0.95\textwidth]{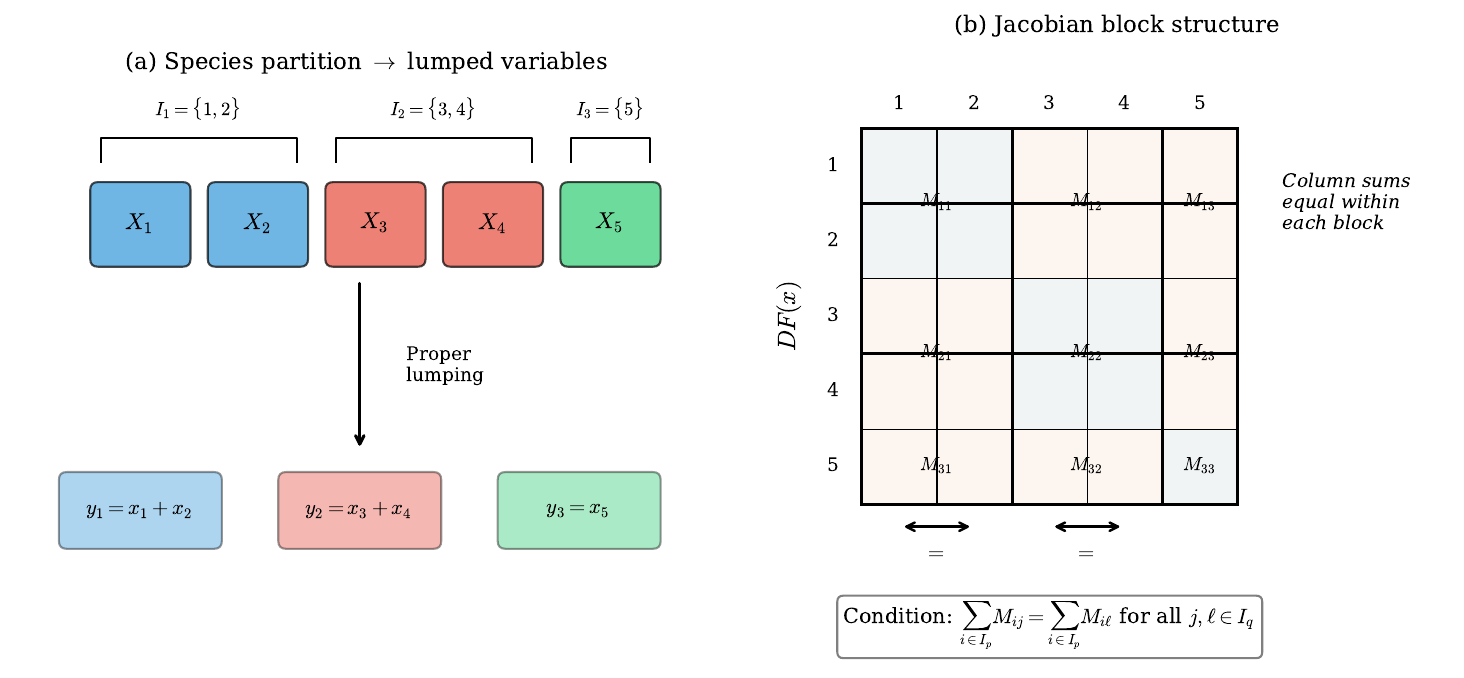}
\caption{Proper lumping and the Jacobian block structure. 
(a)~A partition of species into blocks $I_1, I_2, I_3$ induces lumped 
variables $y_p = \sum_{i \in I_p} x_i$. (b)~The Jacobian $DF(x)$ inherits 
a block structure from the partition. The critical condition for proper 
lumping (Proposition~\ref{colslem}) requires that all column sums within 
each block $M_{pq}$ are equal: 
$\sum_{i \in I_p} M_{ij} = \sum_{i \in I_p} M_{i\ell}$ 
for all $j, \ell \in I_q$.}
\label{fig:proper_lumping}
\end{figure}

\begin{remark}\label{scalerem} {\em 
    From a practical perspective, a lumping approach with \eqref{properlumpmatgam} and arbitrary $\gamma_j>0$ will allow to search in a wider range, due to the additional scaling parameters. This is noted in Pepiot et al.\ \cite{PCP2019}, for instance.
    To illustrate this, we look at the effect of scaling for a quadratic differential system; see \eqref{quadsca} below.
    For the scaled variables 
    \[
    \widetilde x_i=\gamma_ix_i
    \]
    this system becomes
     \begin{equation}\label{quadsca}
        \dot{\widetilde x_i}=\sum_j\widetilde \lambda_{ij}\,\widetilde x_j+\sum_{j,k} \widetilde \alpha_{ijk}\,\widetilde x_j\widetilde x_k,
    \end{equation}
    
    with 
    \begin{equation}\label{tildepars}
     \widetilde \lambda_{ij}=\gamma_j^{-1}\gamma_i \lambda_{ij},\quad \widetilde \alpha_{ijk} =\gamma_i\gamma_j^{-1}\gamma_k^{-1}\alpha_{ijk}.
    \end{equation}
    Thus one may test whether the column sum condition in Lemma \ref{colslem} holds for \eqref{tildepars} with suitable $\gamma_j$.}
\end{remark}

\subsection{Lumpings induced by species permutations}
Classical approaches to lumping are based on arguments from chemistry. For instance, in the introductory paragraph of Wei and Kuo \cite{WK1969a}, the authors mention that grouping species into equivalence classes is a common practice. They specifically mention the PONA analysis (with paraffins, olefins, naphtenes and aromatics as classes) in petroleum processing. Notably, Wei and Kuo focus attention on kinetic lumpability conditions. With this background, we consider mathematical procedures that
lump ``species which behave alike with regard to kinetics'', informally speaking. A natural mathematical interpretation, which we adopt here, is to require that switching equivalent species will produce (mutatis mutandis) the same reaction equations.\\
Thus consider a permutation $\pi$ of species $X_1,\ldots,X_n$. This permutation may be represented by a matrix $P$ such that
\[
x=\begin{pmatrix} x_1\\ \vdots \\ x_n\end{pmatrix}\mapsto P\,x.
\]
\begin{definition}
    We say that this permutation respects complexes if for every complex $\sum_j y_{ji}X_j$ the linear combination $\sum_j y_{ji} \pi(X_j)$ is also a complex of the reaction network. 
\end{definition} 
This is a rather restrictive condition on the permutation. 
\begin{remark}
{\em    We note a different way to state this property: A permutation respects complexes if and only if there exists a permutation matrix $\widehat P$ such that 
\[
P\,Y=Y\,\widehat P.
\]
In this case one verifies 
\[
\left(P\,x\right)^Y= \widehat P\,x^Y.
\] 
}
\end{remark}
 Clearly the permutations which respect all complexes of a given reaction network form a group. In the following, let $P$ represent an element of this group.
\begin{lemma}
   The linear transformation $P$ maps solutions of the differential equation \eqref{eq:ODE2A} to solutions of  
   \begin{equation}\label{eq:ODE2B}
\dot{x}=  Y \widetilde{A(k)}\,  x^Y,\qquad \widetilde{A(k)}:=\widehat P\,A(k){\widehat P}^{-1}.
\end{equation}
 In particular $P$ is a symmetry of \eqref{eq:ODE2A} if and only if $\widehat P \,A(k)\,{\widehat P}^{-1}=A(k)$. 
\end{lemma}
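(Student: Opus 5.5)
The argument is a direct computation with the identity $(Px)^Y=\widehat P\,x^Y$ from the preceding remark, together with $PY=Y\widehat P$. Let $x(t)$ solve \eqref{eq:ODE2A} and put $z(t):=Px(t)$. Then $x=P^{-1}z$, so
\[
\dot z=P\,Y A(k)\,x^Y = Y\widehat P A(k)\,\bigl(P^{-1}z\bigr)^Y .
\]
To proceed we need $(P^{-1}z)^Y$. Since the permutations respecting complexes form a group, $P^{-1}$ also respects complexes. From $PY=Y\widehat P$ we get $P^{-1}Y=Y\widehat P^{-1}$, so the permutation of complexes attached to $P^{-1}$ is $\widehat P^{-1}$. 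The remark then gives $(P^{-1}z)^Y=\widehat P^{-1}z^Y$. Substituting,
\[
\dot z=Y\,\widehat P A(k)\widehat P^{-1} z^Y,
\]
which is exactly \eqref{eq:ODE2B}.

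The one point needing care is the verification of $(Px)^Y=\widehat P x^Y$ and the fact that $\widehat P$ is uniquely determined by $P$. Both follow from the columns of $Y$ being pairwise distinct, since distinct complexes give distinct columns. If the paper's remark is taken as given, I only need to apply it to $P^{-1}$; otherwise I check it directly. The $j$-th entry of $(Px)^Y$ is $\prod_i x_{\pi^{-1}(i)}^{y_{ij}}$, which is the monomial of the complex $\sum_i y_{ij}X_{\pi^{-1}(i)}$. That monomial occurs among the columns at the position given by the induced permutation of complexes. Index conventions (whether $\pi$ or $\pi^{-1}$ appears) have to be kept consistent here.

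For the \enquote{in particular} statement, $P$ is a symmetry exactly when it maps solutions of \eqref{eq:ODE2A} to solutions of the same system. By the first part this holds if and only if
\[
Y\widetilde{A(k)}x^Y=YA(k)x^Y \quad\text{for all } x .
\]
Sufficiency of $\widehat P A\widehat P^{-1}=A$ is immediate. Necessity is the main obstacle. The monomials $x^{y_j}$ are distinct, hence linearly independent as functions, so the identity gives $Y\bigl(\widetilde{A(k)}-A(k)\bigr)=0$. This does not force $\widetilde{A(k)}=A(k)$ unless $Y$ has trivial kernel, or unless we use that both matrices are Laplacians: column sums zero and off-diagonal entries given by rate constants on reaction edges.

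I would argue as follows. Column $j$ of $Y(\widetilde A-A)$ is the combination of the reaction vectors $y_\ell-y_j$ weighted by the differences of rates out of complex $j$, and this vanishes. To get equality entrywise, I would read \enquote{symmetry} at the level of the network with its rate labels, i.e.\ the equality of the reaction graphs with rates, which is how the authors use \eqref{eq:ODE2A}. Alternatively I would note that equality of the ODEs gives $Y\widetilde A=YA$, and state the equivalence with that understanding, since distinct Laplacians can produce the same ODE.
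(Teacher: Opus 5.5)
Your argument for the main claim is the paper's. The paper writes $\frac{d}{dt}Px=PYA(k)x^Y=Y\widehat PA(k)\widehat P^{-1}\widehat Px^Y=Y\widehat PA(k)\widehat P^{-1}(Px)^Y$: it inserts $\widehat P^{-1}\widehat P$ and uses $(Px)^Y=\widehat Px^Y$ directly. That makes your detour through $P^{-1}$ and the group property unnecessary; in any case your identity $(P^{-1}z)^Y=\widehat P^{-1}z^Y$ is just the remark applied to $x=P^{-1}z$.

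For the ``in particular'' clause, the paper says only that it follows from the same chain of equalities. Your caution there is justified. Read ``symmetry'' as: $P$ maps solutions of \eqref{eq:ODE2A} to solutions of the same equation. Then the chain, together with linear independence of the distinct monomials $x^{y_j}$, gives exactly $Y\widetilde{A(k)}=YA(k)$. Using the Laplacian structure, column $j$ of $Y(\widetilde{A(k)}-A(k))$ is a combination of the vectors $y_\ell-y_j$. So this forces $\widetilde{A(k)}=A(k)$ in general only when the complexes are affinely independent, which is weaker than $\ker Y=0$ but usually fails.

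Necessity in the lemma as stated is in fact false. Take species $X_1,X_2$ and complexes $0,X_1,X_2,2X_1,2X_2$, with the following reactions:
\begin{itemize}
\item $X_1\to 0$ at rate $2$;
\item $X_2\to 0$ at rate $3$;
\item $X_2\to 2X_2$ at rate $1$;
\item $2X_1\to X_1$ and $2X_2\to X_2$, each at rate $c$.
\end{itemize}
The swap of $X_1,X_2$ respects complexes. The system $\dot x_i=-2x_i-cx_i^2$ ($i=1,2$) is invariant under it. Yet the permuted network $\widehat PA(k)\widehat P^{-1}$ contains the reaction $X_1\to 2X_1$, which $A(k)$ does not. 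This agrees with the paper's own subsequent remark that the labeled-graph automorphism condition ``may not be necessary''.

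So of your two repairs, the right one is to state the equivalence as $Y\widetilde{A(k)}=YA(k)$, i.e.\ equal net reaction vectors out of each source complex. Reading ``symmetry'' as equality of labeled networks only makes the clause tautological.
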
 
\begin{proof} We have 
\[
\frac d{dt} Px=PYA(k)x^Y=Y\,\widehat P\,A(k)\,{\widehat P}^{-1}\,\widehat Px^Y=Y\,\widehat P\,A(k)\,{\widehat P}^{-1}\,(Px)^Y,
\]
and the assertion follows by this chain of equalities, combined with the criterion for solution-preserving maps.
\end{proof}
\begin{remark}{\em  
    A permutation that respects complexes gives rise to an automorphism of the (unlabeled) graph of the reaction network. Symmetry then imposes restrictions on the rate constants.
The symmetry condition is certainly satisfied when $P$ corresponds to an automorphism of the labeled graph, but this condition may not be necessary, depending on the graph. Such a symmetry-based approach has been applied successfully to epidemic models on networks, 
where graph automorphisms induce exact lumping (Simon et al.\ \cite{SimonTaylorKiss2011}).}
\end{remark}
We now describe a heuristic that establishes a correspondence to lumping: Given a finite linear symmetry group, there exists a natural reduction by nonlinear polynomial invariants; see Sturmfels \cite{Sturm}, but this will not lead to a reduction of dimension. But it is worth a try to single out the {\em linear} invariants and choose these to construct a linear lumping map, with parameter conditions then determined via Lemma \ref{lalem}. Thus one arrives at proper lumpings and critical parameter conditions.
\begin{example}{\em 
    This example serves solely for the purpose of illustration. For the simple reaction network
    \begin{equation*}
   \ce{$X_1$ + $X_2$ <=>[$k_1$][$k_{-1}$] $X_3$}
\end{equation*}
with corresponding differential equation
\[
\begin{array}{rcl}
    \dot x_1&=&-k_1x_1x_2+k_{-1}x_3\\
    \dot x_2&=&-k_1x_1x_2+k_{-1}x_3\\
    \dot x_3&=&k_1x_1x_2-k_{-1}x_3\\
\end{array}
\]
the switching of $X_1$ and $X_2$ yields a complex-respecting permutation. The algebra of polynomials that are invariant with respect to this group action is generated by 
\[
z_1=x_1+x_2,\quad z_2=x_3,\quad z_3=x_1x_2,
\]
and the symmetry reduction yields the system
\[
\begin{array}{rcl}
    \dot z_1 &=&-2k_1z_3+2k_{-1}z_2  \\
     \dot z_2 &=&k_1z_3-k_{-1}z_2  \\
      \dot z_3 &=&-k_1z_1z_3+k_{-1}z_1z_2  \\
\end{array}
\]
with no reduction of dimension.\\
The heuristic approach is to build a candidate for a lumping map from the degree one invariants $x_1+x_2$ and $x_3$, thus
\[
T=\begin{pmatrix}
    1&1&0\\ 0&0&1
\end{pmatrix}, \quad B=\begin{pmatrix}
    1\\ -1\\ 0
\end{pmatrix}.
\]
The condition from Lemma \ref{lalem} is computed as
\[
k_1(x_1-x_2)\begin{pmatrix}
    2\\-1
\end{pmatrix}=0; \text{thus  } k_{1}=0.
\]
We get a linear lumping when the forward reaction is absent, but only then.}
\end{example}

\begin{example}{\em 
For a more substantial example, we again look at the reversible Michaelis--Menten network. The permutation that switches substrate and product while fixing enzyme and complex induces an automorphism of the unlabeled  graph. We go through the formalities:\\
We have species $X_1=S,\,X_2=E,\,X_3=C,\,X_4=P$ and complexes $Y_1=S+E$, $Y_3=C$, $Y_3=E+P$, and therefore
\[
Y=\begin{pmatrix} 1&0&0\\ 1&0&1\\0&1&0\\ 0&0&1\end{pmatrix};\text{  moreover  } A(k)=\begin{pmatrix} -k_1 &k_{-1}&0\\
k_1&-(k_{-1}+k_2) & k_{-2}\\ 0& k_{2}&-k_{-2}\end{pmatrix}.
\]
The nontrivial permutation corresponds to the matrix
\[
R=\begin{pmatrix} 0&0&0&1\\ 0&1&0&0\\ 0&0&1&0\\ 1&0&0&0\end{pmatrix},
\]
and we find 
\[
\widehat R=\begin{pmatrix} 0&0&1\\ 0&1&0\\1&0&0\end{pmatrix}.
\]
Furthermore

\[
\widetilde{A(k)}=\begin{pmatrix} -k_{-2} &k_{2}&0\\
k_{-2}&-(k_{-1}+k_2) & k_1\\ 0& k_{-1}&-k_{1}\end{pmatrix},
\]

hence the symmetry condition amounts to $k_1=k_{-2}$ and $k_{-1}=k_2$. The symmetry of the differential equation thus corresponds to a symmetry of the labeled graph.\\
The invariant algebra of the permutation group is generated by $z_1=x_1+x_4$, $z_2=x_2$, $z_3=x_3$ and $z_4=x_1x_4$. The linear generators already provide a solution-preserving map to the three dimensional system
\[
\begin{array}{rcl}
   \dot z_1  &=& -k_1z_1z_2+(k_{-1}+k_2)z_3  \\
      \dot z_2 &=& -k_1z_1z_2+(k_{-1}+k_2)z_3  \\
      \dot z_3 &=& k_1z_1z_2-(k_{-1}+k_2)z_3  \\
\end{array}.
\]
This is the same parameter condition and essentially the same system as in Example \ref{ex:mm}.
}
\end{example}

\begin{remark}[Connection to bisimulation]\label{bisimrem}{\em
The proper lumping conditions have a natural interpretation in terms of \emph{bisimulation} from theoretical computer science. Two species are bisimilar if they have identical ``behavior'' in a precise sense. For differential equations, \emph{backward differential equivalence} (BDE) requires that equivalent species have identical dynamics from identical initial conditions; \emph{forward differential equivalence} (FDE) requires that sums of equivalent species have identical sum dynamics, see Cardelli et al. \cite{CardelliTTV2017}. \\
Proper lumping corresponds to FDE: the condition that all column sums in each Jacobian block are equal ensures that the sum $y_p = \sum_{i \in I_p} x_i$ satisfies a closed differential equation. The ERODE algorithm (Cardelli et al.\ \cite{CardelliTTV2019}) efficiently computes the coarsest partition satisfying FDE using partition refinement techniques adapted from Markov chain minimization.\\
The BDE condition means invariance of the set where, for each group of equivalent species, their concentrations are equal.}
\end{remark}

\section{Case studies}

In this section we present case studies of two biochemically relevant systems. Our primary purpose is to illustrate the computation of critical parameters, and subsequent reduction, with a focus on algorithmic considerations. We will sketch a few illustrations to ensure the viability and relevance of the method. But a more detailed study, including computations and biological interpretation will be taken up in a future paper. Some of the reduced systems may look quite underwhelming at first sight, but the main interest should lie in small perturbations of critical parameters, as we show by one example. These matters will also be dealt with in a future paper.

\subsection{A self-replication model}  \label{sec:replication}
We apply the critical parameter framework to a biochemical self-replication 
model that motivated the present investigation.
We consider the replication mechanism studied by Gijima and Peacock-L\'opez~\cite{GPL2020}, 
which extends earlier work on minimal self-replicating 
systems (see Peacock-L\'opez \cite{PeacockLopez2001}, Beutel and Peacock-L\'opez \cite{BeutelPL2006}). The mechanism involves two ``food'' 
species $A$ and $B$, a product (template) $P$, and three intermediates $I_a$, $I_b$, 
and $I$. We denote these as $X_1 = A$, $X_2 = B$, $X_3 = P$, 
$X_4 = I_a$, $X_5 = I_b$, and $X_6 = I$.

The reaction network consists of five reversible reactions (the last one is assumed irreversible in \cite{GPL2020}):
\begin{align}
\ce{$X_1$ + $X_3$ & <=>[$k_1$][$k_{-1}$] $X_4$} \label{reac1}\\
\ce{$X_2$ + $X_4$ & <=>[$k_2$][$k_{-2}$] $X_6$} \label{reac2}\\
\ce{$X_2$ + $X_3$ & <=>[$k_3$][$k_{-3}$] $X_5$} \label{reac3}\\
\ce{$X_1$ + $X_5$ & <=>[$k_4$][$k_{-4}$] $X_6$} \label{reac4}\\
\ce{$X_6$ & <=>[$k_5$][$k_{-5}$] $2X_3$.} \label{reac5}
\end{align}

The key feature is \emph{autocatalysis}: the product $P = X_3$ appears on the 
left-hand side of reactions \eqref{reac1} and \eqref{reac3} (as a reactant that 
facilitates intermediate formation) and is regenerated with gain in reaction~\eqref{reac5}. 
The net effect is that $P$ catalyzes its own production, giving rise to the 
self-replication dynamics characteristic of these systems.

With mass action kinetics, the time evolution of this network is governed by the differential equation system
\begin{equation}\label{replicationsys}
\begin{array}{rcl}
\dot x_1 & =& -k_1x_1x_3+ k_{-1} x_4 -k_4x_1x_5 +k_{-4}x_6\\[2pt]
\dot x_2 &= & -k_2x_2x_4 +k_{-2}x_6-k_3x_2x_3+k_{-3} x_5\\[2pt]
\dot x_3&=&  -k_1x_1x_3+ k_{-1} x_4 -k_3x_2x_3+k_{-3} x_5 + 2k_5x_6-2k_{-5}x_3^2 \\[2pt]
\dot x_4&=& k_1x_1x_3- k_{-1} x_4  -k_2x_2x_4 +k_{-2}x_6\\[2pt]
\dot x_5&=&-k_4x_1x_5 +k_{-4}x_6+k_3x_2x_3-k_{-3} x_5   \\[2pt]
\dot x_6&=&k_2x_2x_4 -k_{-2}x_6+k_4x_1x_5 -k_{-4}x_6- k_5x_6+k_{-5}x_3^2.
\end{array}
\end{equation}
This six-dimensional system has two stoichiometric first integrals:
\begin{equation}\label{stoichintegrals}
\mu_1 = x_1 - x_2 + x_4 - x_5, \qquad \mu_2 = x_2 + x_3 + x_4 + 2x_5 + 2x_6.
\end{equation}
The first integral $\mu_1$ reflects the balance between the two ``pathways'' through 
intermediates $I_a$ and $I_b$, while $\mu_2$ represents conservation of total 
``building blocks.''

\paragraph{Setup.}
We seek a linear lumping that preserves the stoichiometric first integrals and reduces 
the system to dimension three; hence effectively to one-dimensional dynamics. Applying row reduction (Remark~\ref{rrerem}), and following Remark~\ref{remconstr},
we consider a lumping matrix of the form
\begin{equation}\label{replumpT}
T = \begin{pmatrix}
0 & 0 & 1 & p & q & r \\
1 & -1 & 0 & 1 & -1 & 0 \\
0 & 1 & 1 & 1 & 2 & 2
\end{pmatrix},
\end{equation}
where the second and third rows encode the stoichiometric first 
integrals~\eqref{stoichintegrals}, and the first row introduces a new observable
\[
y_1 = x_3 + p\, x_4 + q\, x_5 + r\, x_6
\]
that combines the product concentration with weighted intermediate concentrations. 
The parameters $p$, $q$, and $r$ are to be determined.

Following the procedure of Section 4, we apply Lemma \ref{lalem} to determine critical 
parameters. With the lumping matrix \eqref{replumpT}, we compute a basis for $\ker T$:
\[
B = \begin{pmatrix}
p-2 & q-1 & r-2 \\
p-1 & q-2 & r-2 \\
-p & -q & -r \\
1 & 0 & 0 \\
0 & 1 & 0 \\
0 & 0 & 1
\end{pmatrix}.
\]
The lumping condition $T \cdot DF(x,k) \cdot B = 0$ for all $x$ yields a system of 
polynomial equations in the rate parameters and the lumping parameters $p$, $q$, $r$.

According to Proposition \ref{algocrit}, the conditions can be represented as a homogeneous linear system for the vector of rate parameters, with matrix entries quadratic in the entries of $T$. Ordering the rate constants as 
\[
\begin{pmatrix}
    k_1&k_2&k_3&k_4&k_5&k_{-1}&k_{-2}&k_{-3}&k_{-4}&k_{-5}
\end{pmatrix}
\]
and using {\sc Mathematica}\texttrademark, one finds the matrix
{\tiny
\[
\begin{pmatrix}
-((p-1) r) & 0 & 0 & 0 & 0 & 0 & 0 & 0 & 0 & 0 \\
0 & 0 & -((q-1) r) & 0 & 0 & 0 & 0 & 0 & 0 & 0 \\
p (r-2)-r+2 & 0 & q (r-2)-r+2 & 0 & 0 & 0 & 0 & 0 & 0 & -2 (r-2) r \\
0 & -((r-2) (p-r)) & 0 & 0 & 0 & 0 & 0 & 0 & 0 & 0 \\
0 & 0 & 0 & -((r-2) (q-r)) & 0 & 0 & 0 & 0 & 0 & 0 \\
0 & 0 & 0 & 0 & 2-r & 0 & p-r & 0 & q-r & 0 \\
q-p q & 0 & 0 & r-q & 0 & 0 & 0 & 0 & 0 & 0 \\
0 & 0 & -((q-1) q) & 0 & 0 & 0 & 0 & 0 & 0 & 0 \\
p q-p-q+1 & 0 & q^2-3 q+2 & 0 & 0 & 0 & 0 & 0 & 0 & 4 q-2 q r \\
0 & -((q-2) (p-r)) & 0 & 0 & 0 & 0 & 0 & 0 & 0 & 0 \\
0 & 0 & 0 & -((q-1) (q-r)) & 0 & 0 & 0 & 0 & 0 & 0 \\
0 & 0 & 0 & 0 & 0 & 0 & 0 & 1-q & 0 & 0 \\
-((p-1) p) & 0 & 0 & 0 & 0 & 0 & 0 & 0 & 0 & 0 \\
0 & r-p & p-p q & 0 & 0 & 0 & 0 & 0 & 0 & 0 \\
p^2-3 p+2 & 0 & p q-p-q+1 & 0 & 0 & 0 & 0 & 0 & 0 & 4 p-2 p r \\
0 & -((p-1) (p-r)) & 0 & 0 & 0 & 0 & 0 & 0 & 0 & 0 \\
0 & 0 & 0 & -((p-2) (q-r)) & 0 & 0 & 0 & 0 & 0 & 0 \\
0 & 0 & 0 & 0 & 0 & 1-p & 0 & 0 & 0 & 0 \\
0 & 0 & 0 & 0 & 0 & 0 & 0 & 0 & 0 & 0 \\
0 & 0 & 0 & 0 & 0 & 0 & 0 & 0 & 0 & 0 \\
0 & 0 & 0 & 0 & 0 & 0 & 0 & 0 & 0 & 0 \\
0 & 0 & 0 & 0 & 0 & 0 & 0 & 0 & 0 & 0 \\
0 & 0 & 0 & 0 & 0 & 0 & 0 & 0 & 0 & 0 \\
0 & 0 & 0 & 0 & 0 & 0 & 0 & 0 & 0 & 0 \\ 
\end{pmatrix}
\]
}

A further analysis with {\sc Singular} yields a complete set of solutions here, with 22 cases; see appendix, subsection \ref{sec:61comps}. One observation is that some rate parameters must be zero in every case, and in many cases one effectively obtains a generic setting for a smaller reaction network.

Computing the reduced system (according to Remark \ref{redcomprem}) will yield a three dimensional system for variables $y_1,\,y_2,\, y_3$; the built-in constraints will always yield $\dot y_2=\dot y_3=0$.

Here the sparse form of the matrix also permits (in part) an approach ``by hand''. For instance, all but the first entries of the row 1 of the matrix are automatically zero, and this shows that $k_1=0$ or $(p-1)r=0$. Likewise, row 12 shows that $k_{-3}=0$ or $q=1$.\\
We single out some interesting scenarios for which relatively few rate parameters must vanish: Proceeding with $p=q=1$
(which implies equal weighting of the two intermediate pathways), 
leaves the following cases:
\begin{itemize}
    \item For $r\neq 1$ and $r\neq 2$ one finds $k_{2}=k_{4}=k_{-5}=0$ and
    \[
    (2-r)k_5+(1-r)k_{-2}+(1-r)k_{-4}=0;
    \]
    here non-negativity of the rate constants forces $1<r<2.$
    \item In case $r=1$ one finds $k_5=k_{-5}=0$.
    \item In case $r=2$ one finds $k_{2}=k_{4}=k_{-2}+k_{-4}=0$; with non-negativity the latter condition means $k_{-2}=k_{-4}=0$.
\end{itemize}
In all these cases the reduced system will also yield $\dot y_1=0$; hence one may say that at the critical parameters one gains a ``hidden conservation law'' $y_1={\rm const.}$, yielding 
\[
x_3 + p\, x_4 + q\, x_5 + r\, x_6={\rm const.}+O(\varepsilon)
\]
for $O(\varepsilon)$ perturbations of the critical parameters, on fixed finite time intervals.
\begin{example}\label{ex:pqr}{\em
We look at
the case $p=q=r=1$ in more detail, proceeding according to item 3 of Remark \ref{redcomprem}: Augmenting the linear forms $y_1,\,y_2,\,y_3$ from the lumping matrix by $y_4=x_4,\,y_5=x_5$ and $y_6=x_6$, we have a linear coordinate change
\[
\begin{array}{rcl}
x_1 &=&-y_1+y_2+y_3-y_4-y_6\\
                    x_2&=&-y_1 + y_3 - y_5 - y_6\\
                    x_3 &=& y_1 - y_4 - y_5 - y_6\\
  x_4 &=& y_4\\
                            x_5&=& y_5\\
                            x_6 &=&y_6
\end{array}     
\]
With perturbed parameters $k_5=\varepsilon k_5^*$ and $k_{-5}=\varepsilon k_{-5}^*$, the system in lumping-adapted coordinates will be
\[
\begin{array}{rcl}
            \dot y_1&=& -\varepsilon k_{-5}^* (y_1 - y_4 - y_5 - y_6)^2  + \varepsilon k_5^* y_6\\
             \dot y_2&=& 0\\
               \dot y_3&=&0\\           
        \dot y_4&=& k_1 (-y_1 + y_2 + y_3 - y_4 - y_6) (y_1 - y_4 - y_5 - y_6)\\
      &&  - k_2 (-y_1 + y_3 - y_5 - y_6) y_4 - k_{-1} y_4 + k_{-2} y_6\\
     \dot y_5&=& k_3 (-y_1 + y_3 - y_5 - y_6) (y_1 - y_4 - y_5 - y_6)\\
       && - k_4 (-y_1 + y_2 + y_3 - y_4 - y_6) y_5 - k_{-3} y_5 + k_{-4} y_6\\
      \dot y_6&=&k_2 (-y_1 + y_3 - y_5 - y_6) y_4
   + k_4 (-y_1 + y_2 + y_3 - y_4 - y_6) y_5\\&&- k_{-2} y_6 - k_{-4} y_6
        + \varepsilon k_{-5}^* (y_1 - y_4 - y_5 - y_6)^2 
   - \varepsilon k_5^* y_6 
   \end{array}
\]
At $\varepsilon=0$, the first three equations reflect the reduced system. Assigning some constant values to the first integrals $y_2$ and $y_3$, one is left with a four dimensional system with a small parameter $\varepsilon$. \\
In fact, we have uncovered singular perturbation scenarios (see Tikhonov \cite{tikh}, Fenichel \cite{Fenichel1979}): Fixing constant values $c_1$ for $y_2+y_3$ and $c_2$ for $y_3$, we obtain
\[
\begin{array}{rcl}
            \dot y_1&=& -\varepsilon k_{-5}^* (y_1 - y_4 - y_5 - y_6)^2  + \varepsilon k_5^* y_6\\      
        \dot y_4&=& k_1 (-y_1 + c_1 - y_4 - y_6) (y_1 - y_4 - y_5 - y_6)
        - k_2 (-y_1 + c_2 - y_5 - y_6) y_4 \\
        &&- k_{-1} y_4 + k_{-2} y_6\\
     \dot y_5&=& k_3 (-y_1 + c_2 - y_5 - y_6) (y_1 - y_4 - y_5 - y_6)
        - k_4 (-y_1 + c_1 - y_4 - y_6) y_5\\
        &&- k_{-3} y_5 + k_{-4} y_6\\
      \dot y_6&=&k_2 (-y_1 + c_2 - y_5 - y_6) y_4
   + k_4 (-y_1 + c_1 - y_4 - y_6) y_5 - k_{-2} y_6 - k_{-4} y_6\\
        &&+ \varepsilon k_{-5}^* (y_1 - y_4 - y_5 - y_6)^2 
   - \varepsilon k_5^* y_6 
   \end{array}
\]
For instance, this system admits the stationary point $y_1=y_4=y_5=y_6=0$, which has non-invertible Jacobian when $\varepsilon=0$. The Jacobian of the last three equations with respect to $y_4,\,y_5,\,y_6$ at the stationary point $0$ turns out to be 

\[
\begin{pmatrix}
    -k_1c_1-k_2c_2-k_{-1}& -k_1c_1& -k_1c_1+k_{-2}\\
    -k_2c_2& -k_3c_2-k_4c_1-k_{-3}& -k_3c_2+k_{-4}\\
    k_2c_2&k_4c_1& -k_{-2}-k_{-4}
\end{pmatrix}.
\]

Choosing all $k_i>0$ and noting that $c_1\geq 0,\,c_2\geq 0$ in physically relevant cases, one sees that all the eigenvalues of this matrix have negative real part in some open region of parameter space\footnote{Presumably this condition holds for all positive parameters, but the verification is not trivial.}. By the implicit function theorem, when $\varepsilon=0$  there exists a local parameterization  of $y_4, y_5$ and $y_6$ as functions of $y_1$. Moreover, for small positive $\varepsilon$, Tikhonov-Fenichel theory guarantees the existence of a reduced equation for $y_1$, obtained from the first equation by substituting the expressions of $y_4,\,y_5,\,y_6$ as functions of $y_1$. (We mention also the implicit version in Goeke and Walcher \cite{gw2}.) We will not carry out any details here (this will be done in a future paper), but we are satisfied to illustrate another use of linear lumping.}
\end{example}
\begin{example}{\em 
Let us also consider a case for which the reduced system has nonzero right-hand side. We choose component [9] from the Appendix, subsection \ref{sec:61comps}, with
\[
k_1=k_3=k_5=k_{-1}=k_{-3}=0\quad\text{and}\quad p=q=r=0.
\]
The remaining system is
\begin{equation*}\label{replicationsyscut}
\begin{array}{rcl}
\dot x_1 & =&  -k_4x_1x_5 +k_{-4}x_6\\[2pt]
\dot x_2 &= & -k_2x_2x_4 +k_{-2}x_6\\[2pt]
\dot x_3&=&  -2k_{-5}x_3^2 \\[2pt]
\dot x_4&=&  -k_2x_2x_4 +k_{-2}x_6\\[2pt]
\dot x_5&=&-k_4x_1x_5 +k_{-4}x_6  \\[2pt]
\dot x_6&=&k_2x_2x_4 -k_{-2}x_6+k_4x_1x_5 -k_{-4}x_6+k_{-5}x_3^2.
\end{array}
\end{equation*}
With $y_1=x_3$ and the first integrals $y_2=\mu_1$, $y_3=\mu_2$ one obtains the reduced system
\begin{equation*}
    \begin{array}{rcl}
       \dot y_1  & =& -2k_{-5} y_1^2 \\
        \dot y_2 & =&0\\
        \dot y_3&=& 0
    \end{array}
\end{equation*}
}
\end{example}

\subsection{A two-pathway enzyme mechanism}\label{subsec:enzyme}
Enzymes frequently exhibit multiple pathways for substrate binding and catalysis. 
We consider a minimal model capturing this two-pathway structure (an extension of 
the Michaelis--Menten system), which seems analogous to the self-replication model 
by Gijima and Peacock-L\'opez~\cite{GPL2020}, while having a distinct biochemical
interpretation.


Consider an enzyme $E$ that converts substrate $S$ to product $P$ via two 
parallel pathways. Each pathway involves an initial enzyme-substrate 
complex ($C_1$ or $C_2$) that can transition to a common catalytically 
active complex $C$, which then releases product and regenerates the enzyme.

The reaction network consists of seven reactions, with only the last one irreversible:
\begin{align}
\ce{S + E &<=>[$k_1$][$k_{-1}$] C_1} \label{enz:reac1}\\
\ce{C_1 &->[$k_2$] C} \label{enz:reac2}\\
\ce{C &->[$k_{-2}$] C_1} \label{enz:reac2rev}\\
\ce{S + E &<=>[$k_3$][$k_{-3}$] C_2} \label{enz:reac3}\\
\ce{C_2 &->[$k_4$] C} \label{enz:reac4}\\
\ce{C &->[$k_{-4}$] C_2} \label{enz:reac4rev}\\
\ce{C &->[$k_5$] E + P.} \label{enz:reac5}
\end{align}
The two pathways ($S + E \rightleftharpoons C_1 \to C$ and 
$S + E \rightleftharpoons C_2 \to C$) converge on the common 
catalytic complex $C$, which can either proceed to product formation 
or revert to one of the initial complexes.
We will consider the system with the last reaction also reversible.

Renaming the species as $X_1 = S$, $X_2 = E$, $X_3 = C_1$, 
$X_4 = C_2$, $X_5 = C$, and $X_6 = P$, with mass action kinetics, the system is governed by

\begin{equation}\label{enzymeODErev}
\begin{array}{rcl}
\dot x_1 &=& -k_1 x_1x_2 + k_{-1} x_3 - k_3 x_1x_2 + k_{-3} x_4 \\[3pt]
\dot x_2 &=& -k_1 x_1x_2 + k_{-1} x_3 - k_3 x_1x_2 + k_{-3} x_4 + k_5 x_5 -k_{-5}x_2x_6\\[3pt]
\dot x_3 &=& k_1 x_1x_2 - k_{-1} x_3 - k_2 x_3 + k_{-2} x_5 \\[3pt]
\dot x_4 &=& k_3 x_1x_2 - k_{-3} x_4 - k_4 x_4 + k_{-4} x_5 \\[3pt]
\dot x_5 &=& k_2 x_3 + k_4 x_4 - k_{-2} x_5 - k_{-4} x_5 - k_5 x_5 {+k_{-5}x_2x_6}\\[3pt]
\dot x_6 &=& k_5 x_5 -k_{-5}x_2x_6.
\end{array}
\end{equation}

The system admits two stoichiometric first integrals:
\begin{equation}\label{enz:stoich}
\mu_1 = x_2+x_3+x_4+x_5, \qquad \mu_2 = x_1+x_3+x_4+x_5+x_6.
\end{equation}
The integral $\mu_1$ represents \emph{total enzyme conservation}: 
the enzyme cycles through various bound states but is never created or 
destroyed. The integral $\mu_2$ represents \emph{substrate-product 
balance}: substrate is converted to product while passing through 
enzyme-bound intermediates.

We seek a linear lumping reduction to dimension three via critical parameters.
We proceed according to Remark \ref{rrerem}, stipulating the first, third and fourth column to be linearly independent, thus ensuring that substrate and both intermediate complexes will appear in the reduced system.

Thus we get the matrices
\[
T=\begin{pmatrix}
    1&t_1&0&0&t_2&t_3\\
    0&t_4&1&0&t_5&t_6\\
    0&t_7&0&1&t_8&t_9
\end{pmatrix},
\]
and 
\[
B=\begin{pmatrix}
    t_1&t_2&t_3\\
    -1&0&0\\
    t_4&t_5&t_6\\
    t_7&t_8&t_9\\
    0&-1&0\\
    0&0&-1
\end{pmatrix}
\]
The critical parameter condition yields an ideal with generators 
\[
\begin{array}{ll}
    & \langle (k_1+k_3)\cdot(1+t_1), \\
    & (k_1+k_3)\cdot t_1\cdot(1+t_1),\\
    & {k_{-5}\cdot(t_1-t_2+t_3)},\\
    &k_{-1}\cdot t_4+k_{-1}\cdot t_1\cdot t_4+k_2\cdot t_2\cdot t_4+ k_{-3}\cdot t_7+k_{-3}\cdot t_1\cdot t_7+k_4\cdot t_2\cdot t_7,\\
   & (k_1+k_3)\cdot (1+t_1)\cdot t_2,\\
    & -k_5\cdot t_1+k_5\cdot t_2+k_{-2}\cdot t_2+k_{-4}\cdot t_2-k_5\cdot t_3+k_{-1}\cdot t_5+k_{-1}\cdot t_1\cdot t_5+k_2\cdot t_2\cdot t_5\\+&k_{-3}\cdot t_8+k_{-3}\cdot t_1\cdot t_8+k_4\cdot t_2\cdot t_8,\\
    &k_{-5}\cdot t_1{-k_{-5}\cdot t_2}-k_1\cdot t_3-k_3\cdot t_3+k_{-5}\cdot t_3-k_1\cdot t_1\cdot t_3-k_3\cdot t_1\cdot t_3,\\
    &k_{-1}\cdot t_6+k_{-1}\cdot t_1\cdot t_6+k_2\cdot t_2\cdot t_6+k_{-3}\cdot t_9+k_{-3}\cdot t_1\cdot t_9+k_4\cdot t_2\cdot t_9,\\
    & -k_1+k_1\cdot t_4+k_3\cdot t_4,\\
    & t_1\cdot (-k_1+k_1\cdot t_4+k_3\cdot t_4),\\
    &{k_{-5}\cdot (t_4-t_5+t_6)},\\
    & -k_2\cdot t_4-k_{-1}\cdot t_4+k_{-1}\cdot t_4^2+k_2\cdot t_4\cdot t_5+k_{-3}\cdot t_4\cdot t_7+k_4\cdot t_5\cdot t_7,\\
    &t_2\cdot (-k_1+k_1\cdot t_4+k_3\cdot t_4),\\
    & -k_{-2}-k_5\cdot t_4-k_2\cdot t_5+k_5\cdot t_5-k_{-1}\cdot t_5+k_{-2}\cdot t_5+k_{-4}\cdot t_5+k_{-1}\cdot t_4\cdot t_5\\+&k_2\cdot t_5^2-k_5\cdot t_6+k_{-3}\cdot t_4\cdot t_8+k_4\cdot t_5\cdot t_8,\\
    & k_1\cdot t_3+k_{-5}\cdot t_4-k_1\cdot t_3\cdot t_4-k_3\cdot t_3\cdot t_4{-k_{-5}\cdot t_5}+k_{-5}\cdot t_6,\\
    &-k_2\cdot t_6-k_{-1}\cdot t_6+k_{-1}\cdot t_4\cdot t_6+k_2\cdot t_5\cdot t_6+k_{-3}\cdot t_4\cdot t_9+k_4\cdot t_5\cdot t_9,\\
    &-k_3+k_1\cdot t_7+k_3\cdot t_7,\\
    &-t_1\cdot (-k_3+k_1\cdot t_7+k_3\cdot t_7),\\
    &{k_{-5}\cdot (t_7-t_8+t_9)},\\
    &-k_4\cdot t_7-k_{-3}\cdot t_7+k_{-1}\cdot t_4\cdot t_7+k_{-3}\cdot t_7^2+k_2\cdot t_4\cdot t_8+k_4\cdot t_7\cdot t_8,\\
    &t_2\cdot (-k_3+k_1\cdot t_7+k_3\cdot t_7),\\
    &-k_{-4}-k_5\cdot t_7{+}k_{-1}\cdot t_5\cdot t_7-k_4\cdot t_8+k_5\cdot t_8+k_{-2}\cdot t_8-k_{-3}\cdot t_8+k_{-4}\cdot t_8\\+&k_2\cdot t_5\cdot t_8+{k_{-3}}\cdot t_7\cdot t_8+k_4\cdot t_8^2-k_5\cdot t_9,\\
    &k_3\cdot t_3+k_{-5}\cdot t_7-k_1\cdot t_3\cdot t_7-k_3\cdot t_3\cdot t_7{-k_{-5}\cdot t_8}+k_{-5}\cdot t_9,\\
    &k_{-1}\cdot t_6\cdot t_7+k_2\cdot t_6\cdot t_8-k_4\cdot t_9-k_{-3}\cdot t_9+k_{-3}\cdot t_7\cdot t_9+k_4\cdot t_8\cdot t_9\rangle.
\end{array}
\]
Computing with the routine {\tt facstd}  of  {\sc Singular} \cite{sing} a decomposition  of   this ideal can be obtained. The output yields 92  components. The first five are listed in subsection \ref{sec:62comp}, some of the others are of rather large size. Dealing with these is decidedly unwieldy.  But using {\sc Singular} again to eliminate the $t's$ and then computing the minimal associate primes of the elimination ideal with \texttt{minAssChar}   one obtains  rather manageable conditions for the critical parameters, as follows. (The conditions in the {\sc Singular} output specify parameter combinations that must be zero; for instance in component [4] the requirement is $k_{-2}=k_1=0$.)

\begin{verbatim}
[1]:
   _[1]=km5
   _[2]=k1^2*k2^2*k3*k4+k1*k2^2*k3^2*k4- k1^2*k2*k3*k4^2-
   k1*k2*k3^2*k4^2- k1^2*k2^2*k3*k5+k1^2*k2*k3*k4*k5-
   k1*k2*k3^2*k4*k5+  k1*k3^2*k4^2*k5+k1^2*k2*k3*k4*km1+
   2*k1*k2*k3^2*k4*km1- k1*k3^2*k4^2*km1-k1^2*k2*k3*k5*km1-
   k1*k3^2*k4*k5*km1+k1*k3^2*k4*km1^2+k1^2*k2*k3*k4*km2+
   k1*k2*k3^2*k4*km2+k1*k3^2*k4^2*km2+k3^3*k4^2*km2-
   k1^2*k2*k3*km1*km2-k1*k3^2*k4*km1*km2+k1^2*k2^2*k3*km3-
   2*k1^2*k2*k3*k4*km3-k1*k2*k3^2*k4*km3+k1^2*k2*k3*k5*km3+
   k1*k3^2*k4*k5*km3+k1^2*k2*k3*km1*km3-k1*k3^2*k4*km1*km3+
   k1^2*k2*k3*km2*km3+k1*k3^2*k4*km2*km3- k1^2*k2*k3*km3^2-
   k1^3*k2^2*km4-k1^2*k2^2*k3*km4-k1^2*k2*k3*k4*km4-
   k1*k2*k3^2*k4*km4-k1^2*k2*k3*km1*km4-
   k1*k3^2*k4*km1*km4+k1^2*k2*k3*km3*km4+k1*k3^2*k4*km3*km4
[2]:
   _[1]=km5
   _[2]=k1*k2*k4+k2*k3*k4+k3*k4*km1+k1*k2*km3
[3]:
   _[1]=km4
   _[2]=k3
[4]:
   _[1]=km2
   _[2]=k1
[5]:
   _[1]=k4*km1*km2+km1*km2*km3+k2*km3*km4+km1*km3*km4
   _[2]=k3*km1*km2+k1*k2*km4+k2*k3*km4+k3*km1*km4
   _[3]=k1*k4*km2+k3*k4*km2+k1*km2*km3+k1*km3*km4
   _[4]=k1*k2*k4+k2*k3*k4+k3*k4*km1+k1*k2*km3
[6]:
   _[1]=k2-k4+km1-km3
   _[2]=k3*km2-k1*km4
\end{verbatim}

We discuss these components briefly. 
\begin{enumerate}
    \item 
The lumping matrices for components [2], [3], [4] and [5] actually yield generic networks comprising fewer reactions. This is obvious for component [4] (where the reactions with product $C_1$ are ``switched off'') and likewise for component [3]. For component [2], the second condition, in view of non-negativity of the rate constants, yields 
\[
k_1k_2k_4=k_2k_3k_4=k_3k_4k_{-1}=k_1k_2k_{-3}=0,
\]
and again this leads to generic settings for smaller networks. A similar argument applies to component [5]. \\
To further analyze these systems, the algorithm from subsection \ref{sec:genconstr} is readily available to identify Type 1 subspaces and subsequent reductions.\\
For instance, we look at the case $k_3=k_{-4}=0$,
with differential equation system
\begin{equation*}
\begin{array}{rcl}
\dot x_1 &=& -k_1x_1x_2+  k_{-1} x_3 + k_{-3} x_4 \\[3pt]
\dot x_2 &=& -k_1x_1x_2+k_{-1} x_3  + k_{-3} x_4 + k_5 x_5 -k_{-5}x_2x_6\\[3pt]
\dot x_3 &=&  k_1x_1x_2- k_{-1} x_3 - k_2 x_3 + k_{-2} x_5 \\[3pt]
\dot x_4 &=&  - k_{-3} x_4 -k_4x_4 \\[3pt]
\dot x_5 &=& k_2 x_3 +k_4x_4 - k_{-2} x_5 - k_5 x_5 +k_{-5}x_2x_6\\[3pt]
\dot x_6 &=& k_5 x_5 -k_{-5}x_2x_6.
\end{array}
\end{equation*}
From a chemical perspective, the switched-off reactions mean that the network essentially describes a single-pathway enzyme reaction with two intermediate complexes $C_1$ and $C$.
Applying the algorithm for Type 1 subspaces (starting with every species), one finds that only $Tx=x_4$ will provide a reduction (which is obvious from the shape of the differential equation). The case $k_1=k_{-2}=0$ is analogous, with $C_2$ and $C$.
\item Considering component [1], this looks rather challenging. But we note that choosing all $k_i$ ($i\not=-5$) equal and positive will satisfy the second condition, so for this scenario there exist linear lumping reductions that are physically meaningful.
\item Component [6] has a direct interpretation.  Its two equations can be written as $k_{-1}+k_2=k_{-3}+k_4$ and $k_3k_{-2}=k_1k_{-4}$.  The first equality balances the total exit rates from the two branch complexes $C_1$ and $C_2$, while the second equality says that the pathway bias is the same for association from $S+E$ and for return from the common complex $C$.  Substituting these relations into the Li--Rabitz equations in the row-echelon chart yields the lumping matrix
\[
T=
\begin{pmatrix}
1&-1&0&0&0&1\\
0&\dfrac{k_1}{k_1+k_3}&1&0&\dfrac{k_1}{k_1+k_3}&0\\
0&\dfrac{k_3}{k_1+k_3}&0&1&\dfrac{k_3}{k_1+k_3}&0
\end{pmatrix}.
\]
The corresponding variables are
\[
y_1=x_1-x_2+x_6,
\quad
 y_2=x_3+\frac{k_1}{k_1+k_3}(x_2+x_5),
\quad
 y_3=x_4+\frac{k_3}{k_1+k_3}(x_2+x_5),
\]
and the reduced system is
\[
\dot y_1=0,
\quad
\dot y_2=\frac{k_{-1}+k_2}{k_1+k_3}(k_1y_3-k_3y_2),
\quad
\dot y_3=-\frac{k_{-1}+k_2}{k_1+k_3}(k_1y_3-k_3y_2).
\]
Equivalently, $z=y_3-(k_3/k_1)y_2=x_4-(k_3/k_1)x_3$ satisfies $\dot z=-(k_{-1}+k_2)z$, so the two intermediate complexes relax to the fixed ratio $C_2=(k_3/k_1)C_1$.
\end{enumerate}

\section{Discussion and outlook}
We developed a systematic theory of linear lumping for parameter-dependent mass 
action networks, centered on the distinction between generic and critical parameter regimes. \\
On the one hand, for generic parameters---those ranging in a Zariski-dense subset of parameter space---we 
have shown that exact linear lumping yields only reductions that are, in a precise sense, 
forced by network structure, combining elimination of non-reactant species (Type 1) and projection 
along stoichiometric first integrals (Type 2). This result extends beyond mass 
action kinetics to product-form rate laws including Michaelis--Menten and Hill kinetics, 
suggesting that the limitation is fundamental to the algebraic structure of reaction 
networks rather than an artifact of the mass action assumption.

The generic case provides theoretical context for computational lumping methods such as 
CLUE~\cite{OVPT2021} and ERODE~\cite{CardelliTTV2017}. These algorithms seek structural 
(parameter-independent) lumpings, which correspond precisely to our generic setting. Our 
results explain both the scope and limitations of such approaches: when these algorithms 
find only modest reductions, it is because the generic theory permits nothing more. Moreover, our results may be used to greatly simplify the necessary computations in these algorithms.

On the other hand, the critical parameter framework addresses what structural methods cannot---identifying 
parameter-dependent reductions that become available when rate constants satisfy special 
algebraic relations. We have shown for mass action systems that the determination of critical parameter values and corresponding lumping matrices amounts to solving a finite system of polynomial equations for the rate parameters and matrix entries. From a practical perspective feasibility problems arise for larger systems, as could be expected, but there are escape routes to obtain at least partial solutions. 

Turning to proper lumpings -- a classical concept used by various communities -- we introduce a new aspect by exhibiting a relation to permutation symmetries and reduction by invariants. Moreover, from a practical perspective we give a precise description of all quadratic systems which admit proper lumpings.

The relationship between lumping and time-scale methods such as the quasi-steady-state 
approximation (Segel and Slemrod\cite{SegelSlemrod1989}, Shoffner \cite{Shoffner}), computational singular 
perturbation (Lam and Goussis \cite{LamGoussis1994}), and intrinsic low-dimensional manifolds (Maas and Pope \cite{MaasPope1992}) 
deserves emphasis. These approaches are complementary rather than competing, even if they overlap in some instances (see Example \ref{ex:pqr}); thus, critical parameters for linear lumping may at the same time be critical parameters for singular perturbations (Tikhonov-Fenichel parameter values; see Goeke et al.~\cite{gwz}. For complex networks exhibiting both structural symmetries and time-scale 
separation, one may apply lumping first to remove algebraic redundancies, then 
time-scale methods to address the remaining slow-fast decomposition.

The critical parameter perspective has practical implications beyond exact reduction. When system parameters lie $\varepsilon$-close to a critical parameter value, the corresponding lumping map provides an approximate reduction with error of order $\varepsilon$ (at least) over finite time intervals. Numerical results indicate that for many systems such an error bound is also valid for all positive times, but there remains a need for a rigorous statement and proof. Moreover, by arguments analogous to those characterizing quasi-steady state (Goeke et al.~\cite{GWZ2015}), approximate lumping with arbitrarily small error implies proximity to critical parameters. This observation unifies exact and approximate lumping within a single geometric framework, where the critical variety in parameter space serves as an organizing structure.

Several directions merit further investigation. The present work concerns linear lumping
maps; the theory of nonlinear lumping, while developed by Li, Rabitz, and T\'oth~\cite{LRT1994} 
for fixed parameters, has not been explored from the generic/critical parameter 
perspective. Stochastic reaction networks modeled as continuous-time Markov chains 
involve related but distinct lumping conditions, and recent work on exact stochastic 
lumping~\cite{CardelliStochastic2021} suggests that parameter-dependence there may exhibit 
different structure. Finally, while our approach is algorithmic in principle, efficient 
implementations for large networks would benefit from tools in numerical algebraic 
geometry, particularly for computing the irreducible components and dimensions of 
critical varieties.

\section*{Code Availability}
A Python implementation of the algorithms and examples presented in this paper 
is available at \url{https://github.com/santiago-schnell/lumping-of-reaction-networks}. 
The package provides tools for defining reaction networks, identifying generic and 
critical lumpings, and validating reductions numerically.

\section{Appendix}
\subsection{Computations for subsection \ref{sec:replication}}\label{sec:61comps}
Evaluating the critical parameter condition $T\,DF(x,k)\,B=0$ (for all x) amounts to 
determining the zeros of the corresponding ideal in $\mathbb C[p,q,r, k_i]$.  Computing 
with the procedure {\tt minAssChar} of  {\sc Singular} \cite{sing} one obtains the 
minimal associate primes which define  22  irreducible components of the variety of 
the ideal. We include  the output below. Components [20], [21] and [22] correspond to 
the cases obtained by direct inspection in subsection \ref{sec:replication}.
\begin{verbatim}[1]:
   _[1]=km5
   _[2]=km3
   _[3]=km1
   _[4]=k4
   _[5]=k3
   _[6]=k2
   _[7]=k1
   _[8]=r*k5-p*km2+r*km2-q*km4+r*km4-2*k5
[2]:
   _[1]=km3
   _[2]=km1
   _[3]=k4
   _[4]=k3
   _[5]=k2
   _[6]=k1
   _[7]=r-2
   _[8]=p*km2+q*km4-2*km2-2*km4
[3]:
   _[1]=km5
   _[2]=km3
   _[3]=km1
   _[4]=k4
   _[5]=k3
   _[6]=k1
   _[7]=p-r
   _[8]=r*k5-q*km4+r*km4-2*k5
[4]:
   _[1]=km4
   _[2]=km3
   _[3]=km1
   _[4]=k4
   _[5]=k3
   _[6]=k1
   _[7]=r-2
   _[8]=p-2
[5]:
   _[1]=km5
   _[2]=km3
   _[3]=km1
   _[4]=k3
   _[5]=k2
   _[6]=k1
   _[7]=q-r
   _[8]=r*k5-p*km2+r*km2-2*k5
[6]:
   _[1]=km3
   _[2]=km2
   _[3]=km1
   _[4]=k3
   _[5]=k2
   _[6]=k1
   _[7]=r-2
   _[8]=q-2
[7]:
   _[1]=km5
   _[2]=km3
   _[3]=km1
   _[4]=k5
   _[5]=k3
   _[6]=k1
   _[7]=q-r
   _[8]=p-r
[8]:
   _[1]=km3
   _[2]=km1
   _[3]=k3
   _[4]=k1
   _[5]=r-2
   _[6]=q-2
   _[7]=p-2
[9]:
   _[1]=km3
   _[2]=km1
   _[3]=k5
   _[4]=k3
   _[5]=k1
   _[6]=r
   _[7]=q
   _[8]=p
[10]:
   _[1]=km5
   _[2]=km3
   _[3]=k4
   _[4]=k3
   _[5]=k2
   _[6]=p-1
   _[7]=r*k5+r*km2-q*km4+r*km4-2*k5-km2
[11]:
   _[1]=km3
   _[2]=k4
   _[3]=k3
   _[4]=k2
   _[5]=r-2
   _[6]=p-1
   _[7]=q*km4-km2-2*km4
[12]:
   _[1]=km5
   _[2]=km3
   _[3]=k4
   _[4]=k3
   _[5]=r-1
   _[6]=p-1
   _[7]=q*km4+k5-km4
[13]:
   _[1]=km5
   _[2]=km3
   _[3]=k3
   _[4]=k2
   _[5]=q-r
   _[6]=p-1
   _[7]=r*k5+r*km2-2*k5-km2
[14]:
   _[1]=km3
   _[2]=km2
   _[3]=k3
   _[4]=k2
   _[5]=r-2
   _[6]=q-2
   _[7]=p-1
[15]:
   _[1]=km5
   _[2]=km1
   _[3]=k4
   _[4]=k2
   _[5]=k1
   _[6]=q-1
   _[7]=r*k5-p*km2+r*km2+r*km4-2*k5-km4
[16]:
   _[1]=km1
   _[2]=k4
   _[3]=k2
   _[4]=k1
   _[5]=r-2
   _[6]=q-1
   _[7]=p*km2-2*km2-km4
[17]:
   _[1]=km5
   _[2]=km1
   _[3]=k4
   _[4]=k1
   _[5]=q-1
   _[6]=p-r
   _[7]=r*k5+r*km4-2*k5-km4
[18]:
   _[1]=km4
   _[2]=km1
   _[3]=k4
   _[4]=k1
   _[5]=r-2
   _[6]=q-1
   _[7]=p-2
[19]:
   _[1]=km5
   _[2]=km1
   _[3]=k2
   _[4]=k1
   _[5]=r-1
   _[6]=q-1
   _[7]=p*km2+k5-km2
[20]:
   _[1]=km5
   _[2]=k4
   _[3]=k2
   _[4]=q-1
   _[5]=p-1
   _[6]=r*k5+r*km2+r*km4-2*k5-km2-km4
[21]:
   _[1]=km2+km4
   _[2]=k4
   _[3]=k2
   _[4]=r-2
   _[5]=q-1
   _[6]=p-1
[22]:
   _[1]=km5
   _[2]=k5
   _[3]=r-1
   _[4]=q-1
   _[5]=p-1
\end{verbatim}
\subsection{Computations for subsection \ref{subsec:enzyme}}\label{sec:62comp}
A {\sc Singular} code for the ideal decomposition  using the routine \texttt{facstd} 
is as follows:
{\small

\medskip
      
ring R= 0,(t1,t2,t3,t4,t5,t6,t7,t8,t9,k1, k2, k3, k4, k5, km1,km2, km3,km4,km5),dp;

option(redSB);

ideal g=$(k1 + k3)*(1 + t1), -((k1 + k3)*t1*(1 + t1)), km5*(t1 - t2 + t3), km1*t4 + km1*t1*t4 + k2*t2*t4 + km3*t7 + km3*t1*t7 + k4*t2*t7, 
 -((k1 + k3)*(1 + t1)*t2), -(k5*t1) + k5*t2 + km2*t2 + km4*t2 - k5*t3 + km1*t5 + km1*t1*t5 + k2*t2*t5 + km3*t8 + km3*t1*t8 + k4*t2*t8, 
 km5*t1 - km5*t2 - k1*t3 - k3*t3 + km5*t3 - k1*t1*t3 - k3*t1*t3, km1*t6 + km1*t1*t6 + k2*t2*t6 + km3*t9 + km3*t1*t9 + k4*t2*t9, 
 -k1 + k1*t4 + k3*t4, -(t1*(-k1 + k1*t4 + k3*t4)), km5*(t4 - t5 + t6), -(k2*t4) - km1*t4 + km1*t4^2 + k2*t4*t5 + km3*t4*t7 + k4*t5*t7, 
 -(t2*(-k1 + k1*t4 + k3*t4)), -km2 - k5*t4 - k2*t5 + k5*t5 - km1*t5 + km2*t5 + km4*t5 + km1*t4*t5 + k2*t5^2 - k5*t6 + km3*t4*t8 + 
  k4*t5*t8, k1*t3 + km5*t4 - k1*t3*t4 - k3*t3*t4 - km5*t5 + km5*t6, -(k2*t6) - km1*t6 + km1*t4*t6 + k2*t5*t6 + km3*t4*t9 + k4*t5*t9, 
 -k3 + k1*t7 + k3*t7, -(t1*(-k3 + k1*t7 + k3*t7)), km5*(t7 - t8 + t9), -(k4*t7) - km3*t7 + km1*t4*t7 + km3*t7^2 + k2*t4*t8 + k4*t7*t8, 
 -(t2*(-k3 + k1*t7 + k3*t7)), -km4 - k5*t7 + km1*t5*t7 - k4*t8 + k5*t8 + km2*t8 - km3*t8 + km4*t8 + k2*t5*t8 + km3*t7*t8 + k4*t8^2 - 
  k5*t9, k3*t3 + km5*t7 - k1*t3*t7 - k3*t3*t7 - km5*t8 + km5*t9, km1*t6*t7 + k2*t6*t8 - k4*t9 - km3*t9 + km3*t7*t9 + k4*t8*t9;$
list j=facstd(g);
j;


The computation takes a few minutes.  Altogether there are  92 components in the output; 
the first five of them  are shown here:}
\begin{verbatim}
    [1]:
   _[1]=km5
   _[2]=km4
   _[3]=km3
   _[4]=km2
   _[5]=km1
   _[6]=k5
   _[7]=k4
   _[8]=k3
   _[9]=k2
   _[10]=k1
[2]:
   _[1]=km5
   _[2]=km4
   _[3]=km3
   _[4]=km2
   _[5]=k5
   _[6]=k4
   _[7]=k3
   _[8]=k1
   _[9]=t8*k2+t7*km1
   _[10]=t5*k2+t4*km1-k2-km1
   _[11]=t2*k2+t1*km1+km1
   _[12]=t5*t7-t4*t8-t7+t8
   _[13]=t2*t7-t1*t8-t8
   _[14]=t2*t4-t1*t5+t1-t2-t5+1
[3]:
   _[1]=km5
   _[2]=km4
   _[3]=km3
   _[4]=km1
   _[5]=k5
   _[6]=k4
   _[7]=k3
   _[8]=k1
   _[9]=t8
   _[10]=t5-1
   _[11]=t2
[4]:
   _[1]=km5
   _[2]=km3
   _[3]=km1
   _[4]=k4
   _[5]=k3
   _[6]=k2
   _[7]=k1
   _[8]=t7*k5-t8*k5+t9*k5-t8*km2-t8*km4+km4
   _[9]=t4*k5-t5*k5+t6*k5-t5*km2-t5*km4+km2
   _[10]=t1*k5-t2*k5+t3*k5-t2*km2-t2*km4
   _[11]=t2*t4-t1*t5-t3*t5+t2*t6+t2*t7-t1*t8-t3*t8+t2*t9+t1-t2+t3
   _[12]=t5*t7*km2-t4*t8*km2-t6*t8*km2+t5*t9*km2+t5*t7*km4-t4*t8*km4
   -t6*t8*km4+t5*t9*km4-t7*km2+t8*km2-t9*km2+t4*km4-t5*km4+t6*km4
   _[13]=t2*t7*km2-t1*t8*km2-t3*t8*km2+t2*t9*km2+t2*t7*km4-t1*t8*km4
   -t3*t8*km4+t2*t9*km4+t1*km4-t2*km4+t3*km4
[5]:
   _[1]=km5
   _[2]=km3
   _[3]=k4
   _[4]=k3
   _[5]=k1
   _[6]=t7*k5-t8*k5+t9*k5-t8*km2-t8*km4+km4
   _[7]=t4*k5-t5*k5+t6*k5-t5*km2-t5*km4+km2
   _[8]=t1*k5-t2*k5+t3*k5-t2*km2-t2*km4
   _[9]=t8*k2+t7*km1
   _[10]=t5*k2+t4*km1-k2-km1
   _[11]=t2*k2+t1*km1+km1
   _[12]=t5*t7-t4*t8-t7+t8
   _[13]=t2*t7-t1*t8-t8
   _[14]=t3*t5-t2*t6+t3*t8-t2*t9-t3-t5-t8+1
   _[15]=t3*t4-t1*t6+t3*t7-t1*t9-t3-t4-t6-t7-t9+1
   _[16]=t2*t4-t1*t5+t1-t2-t5+1
   _[17]=t6*t8*km2-t5*t9*km2+t6*t8*km4-t5*t9*km4+t9*km2-t4*km4+t5*km4-t6*km4-t7*km4+t8*km4
   _[18]=t3*t8*km2-t2*t9*km2+t3*t8*km4-t2*t9*km4-t8*km2-t1*km4+t2*km4-t3*km4-t8*km4
   _[19]=t6*t8*k5-t5*t9*k5+t9*k5-t5*km4-t8*km4+km4
   _[20]=t3*t8*k5-t2*t9*k5-t8*k5-t2*km4
   _[21]=t6*t7*km1*km2-t4*t9*km1*km2+t6*t7*km1*km4-t4*t9*km1*km4+t9*km1*km2+t4*k2*km4
   +t6*k2*km4+t7*k2*km4+t9*k2*km4+t4*km1*km4+t7*km1*km4+t9*km1*km4-k2*km4-km1*km4
   _[22]=t3*t7*km1*km2-t1*t9*km1*km2+t3*t7*km1*km4-t1*t9*km1*km4-t7*km1*km2-t9*km1*km2
   +t1*k2*km4+t3*k2*km4+t1*km1*km4-t7*km1*km4-t9*km1*km4+km1*km4
\end{verbatim}


\end{document}